\begin{document}
\newcommand{\Span}{\mathop{\mathrm{span}}\nolimits}
\newcommand{\argmax}{\mathop{\mathrm{argmax}}\nolimits}
\newcommand{\argmin}{\mathop{\mathrm{argmin}}\nolimits}
\newcommand{\Per}{\mathop{\mathrm{Per}}\nolimits}
\newcommand{\Var}{\mathop{\mathrm{Var}}\nolimits}
\newcommand{\Vol}{\mathop{\mathrm{Vol}}\nolimits}
\newcommand{\supp}{\mathop{\mathrm{supp}}\nolimits}
\newcommand{\Cov}{\mathop{\mathrm{Cov}}\nolimits}
\newcommand{\Corr}{\mathop{\mathrm{Corr}}\nolimits}
\newcommand{\sgn}{\mathop{\mathrm{sgn}}\nolimits}
\newcommand{\conv}{\mathop{\mathrm{conv}}\nolimits}
\newcommand{\co}{\mathop{\mathrm{co}}\nolimits}
\newcommand{\law}{\mathop{\mathrm{Law}}\nolimits}
\newcommand{\Intr}{\mathop{\mathrm{Int}}\nolimits}
\newcommand{\diag}{\mathop{\mathrm{diag}}\nolimits}
\newcommand{\dist}{\mathop{\mathrm{dist}}\nolimits}
\newcommand{\Sd}{{\mathbb S}^{d-1}}
\newcommand{\ind}{\mathbbm{1}}

\newtheorem*{def*}{Definition}

\newcommand{\be}{\begin{equation}}
\newcommand{\ee}{\end{equation}}
\newcommand{\bea}{\begin{eqnarray}}
\newcommand{\eea}{\end{eqnarray}}
\newcommand{\beaa}{\begin{eqnarray*}}
\newcommand{\eeaa}{\end{eqnarray*}}

\renewcommand{\proofname}{\bf Proof}
\newtheorem*{rem*}{Remark}
\newtheorem*{cor*}{Corollary}
\newtheorem{cor}{Corollary}
\newtheorem*{conj}{Conjecture}
\newtheorem{prop}{Proposition}
\newtheorem*{prop1}{Proposition 1}
\newtheorem{lem}{Lemma}
\newtheorem*{lem1'}{Lemma $\mathbf{1^\prime}$}
\newtheorem{theo}{Theorem}
\newfont{\zapf}{pzcmi}

\def\R{\mathbb{R}}
\def\Z{\mathbb{Z}}
\def\N{\mathbb{N}}
\def\E{\mathbb{E}}
\def\P{\mathbb{P}}
\def\V{\mathbb{D}}
\def\S{\mathbb{S}}
\def\I{\mathbbm{1}}
\newcommand{\D}{\hbox{\zapf D}}
\newcommand{\Pt}{\widetilde{\mathbb{P}}}
\newcommand{\Et}{\widetilde{\mathbb{E}}}

\newcommand{\bt}{\begin{theo}}
\newcommand{\et}{\end{theo}}
\newcommand{\bl}{\begin{lem}}
\newcommand{\el}{\end{lem}}
\newcommand{\bc}{\begin{cor*}}
\newcommand{\ec}{\end{cor*}}
\newcommand{\br}{\begin{rem*}}
\newcommand{\er}{\end{rem*}}
\newcommand{\bp}{\begin{proof}}
\newcommand{\ep}{\end{proof}}
\newcommand{\bes}{\begin{ex}}
\newcommand{\ees}{\end{ex}}

\title{Convex hulls of multidimensional random walks}

\author{Vladislav Vysotsky}
\address{University of Sussex and St.\ Petersburg Department of Steklov Mathematical Institute}
\email{v.vysotskiy@sussex.ac.uk, vysotsky@pdmi.ras.ru}

\author{Dmitry Zaporozhets}
\address{St.\ Petersburg Department of Steklov Mathematical Institute}
\email{zap1979@gmail.com}

\thanks{This paper was written when V.V. was affiliated to Imperial College London, where his work was supported by People Programme (Marie Curie Actions) of the European Union's Seventh Framework Programme (FP7/2007-2013) under REA grant agreement n$^\circ$[628803].
He was also supported in part by Grant 16-01-00367 by RFBR. The work of D.Z was supported in part by Grant 16-01-00367 by RFBR and by Project SFB 701 of Bielefeld University}

\begin{abstract}
Let $S_k$ be a random walk in $\mathbb R^d$ such that its distribution of increments does not assign mass to hyperplanes. We study the probability $p_n$ that the convex hull $\conv (S_1, \dots, S_n)$ of the first $n$ steps of the walk does not include the origin. By providing an explicit formula, we show that for planar symmetrically distributed random walks, $p_n$ does not depend on the distribution of increments. This extends the well known result by Sparre Andersen (1949) that a one-dimensional random walk satisfying the above continuity and symmetry assumptions stays positive with a distribution-free probability. We also find the asymptotics of $p_n$ as $n \to \infty$ for any planar random walk with zero mean square-integrable increments. 


We further developed our approach from the planar case to study a wide class of geometric characteristics of convex hulls of random walks in any dimension $d \ge 2$. In particular, we give formulas for the expected value of the number of faces, the volume, the surface area, and other intrinsic volumes, including the following multidimensional generalization of the Spitzer--Widom formula (1961) on the perimeter of planar walks: 
$$
\mathbb E V_1 (\conv(0, S_1, \dots, S_n)) = \sum_{k=1}^n \frac{\mathbb E \|S_k\|}{k},
$$
where $V_1$ denotes the first intrinsic volume, which is proportional to the mean width. 

These results have applications to geometry, and in particular, imply the formula by Gao and  Vitale (2001) for the intrinsic volumes of special path-simplexes, called canonical orthoschemes, which are finite-dimensional approximations of the closed convex hull of a Wiener spiral. 
Moreover, there is a direct connection between spherical intrinsic volumes of these simplexes and the probabilities $p_n$.

We also prove similar results for convex hulls of random walk bridges, and more generally, for partial sums of exchangeable random vectors.


\medskip

{\it Key words:} convex hull, random walk, distribution-free probability, random polytope, intrinsic volume, spherical intrinsic volume, average number of faces, average surface area, persistence probability, orthoscheme, path-simplex, Wiener spiral, uniform Tauberian theorem.

MSC 2010: Primary: 60D05, 60G50, 60G70; secondary: 52B11.
\end{abstract}

\maketitle

\section{Introduction and results for planar random walks} \label{Sec: Intro}

\subsection{Motivation}
Let $S_n = X_1 + \dots + X_n$ be a random walk in $\R^d$. This paper was motivated by the following question: What is the probability that $\conv(S_1, \dots, S_n)$, the convex hull of the first $n$ steps of the walk, does not include the origin? This is a natural high-dimensional generalization of the classical problem to find the probability that a one-dimensional random walk stays positive (or negative) by time $n$. In this paper we develop a combinatorial approach that answers the question in some particular cases and, importantly, allows one to obtain further results on expected geometric characteristics of the convex hull including its expected number of faces, volume, surface area, and other intrinsic volumes. Most of our main results are presented in the form of {\it exact} non-asymptotic formulas.

Our interest in the probabilities $\P(0 \notin \conv(S_1, \dots, S_n))$ emerged from two different topics. First, we were interested in a multidimensional version of the one-dimensional {\it persistence problem} of finding the probability that a stochastic process (the random walk, in our case) stays above a certain level. Over the past ten years, such problems have drawn a lot of attention from both mathematical and theoretical physics communities; see the survey papers by Aurzada and Simon~\cite{AS} and Bray at al.~\cite{BMS13}. 

Second, we were aware of the direct connection to geometry: for random walks with Gaussian increments, $\frac12 \P(0 \notin \conv(S_1, \dots, S_n))$ equals the $d$-th \emph{spherical intrinsic volume} of a certain path-simplex in $\R^n$ called the {\it canonical orthoscheme}.
This simplex is defined as the convex hull of $n$ vectors whose Gram matrix coincides with the covariance matrix of a standard Brownian motion sampled at times $1, \dots, n$. Spherical intrinsic volumes are spherical analogues of classical Euclidean intrinsic volumes. 
The details on this connection of our problem to geometry are explained below in Section~\ref{1330}, where we also discuss the other geometric properties of canonical orthoschemes.

We were also inspired by two famous results. By Sparre Andersen~\cite[Theorem 2]{Sparre}, for any one-dimensional random walk with continuous symmetric distribution of increments,
\be \label{!!}
\P(S_1 > 0, \dots, S_n > 0) = \frac{(2n-1)!!}{(2n)!!}.
\ee
That is, the probability to stay positive does not depend on the distribution. The other distribution-free result, which is due to Wendel~\cite{Wendel}, also concerns symmetric distributions and describes convex hulls of independent identically distributed random vectors. Let $X_1, \dots, X_n$ be such random vectors in $\R^d$ that satisfy two additional assumptions:
\begin{equation}
\label{cond H_0}
\P(X_1 \in h) =0 \tag{H$_0$}\quad \hbox{for any hyperplane }  h \subset \R^d   \hbox{ passing through the origin},
\end{equation}
and the distribution of $X_1$ is centrally symmetric, i.e.
\be
\label{cond Sym}
X_1\overset{d}{=} -X_1. \tag{S}
\ee
Then
\be \label{conv Wendel}
\P(0 \notin \conv(X_1, \dots, X_n)) = \frac{1}{2^{n-1}}\sum_{k=0}^{d-1}\binom{n-1}{k}.
\ee

Wendel assumed \eqref{cond H_0} to ensure that with probability one, $X_1, \dots, X_n$ are in  {\it
general position}, that is, any $d$ of these vectors are a.s. linearly independent. We will need the stronger assumption
\begin{equation}\label{cond H}
\P(X_1 \in h) =0 \tag{H}\quad \hbox{for any affine hyperplane}\quad h \subset \R^d,
\end{equation}
which in particular guarantees that any one-dimensional projection of $X_1$ has a continuous distribution. We will use this assumption throughout the paper.

\subsection{First results}
There is a similarity between the results of Sparre Andersen and Wendel that stems from the use of combinatorial arguments in their proofs. This motivated our first result, a distribution-free two-dimensional version of \eqref{!!}: 


\begin{theo}\label{main}
Let $d=2$, and assume that \eqref{cond H} and \eqref{cond Sym} hold. Then
\be
\label{symmetric}
\P(0 \notin \conv(S_1,  \dots, S_n)) = \sum_{k=1}^n \frac{(2n-2k-1)!!}{k \cdot (2n-2k)!!}.
\ee
\end{theo}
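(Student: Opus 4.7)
The plan is to reformulate the event $\{0 \notin \conv(S_1, \dots, S_n)\}$ as the existence of a supporting direction and to exploit the one-dimensional Sparre Andersen formula \eqref{!!} applied to projected walks. Concretely, $0 \notin \conv(S_1, \dots, S_n)$ iff there exists $v \in \S^{1}$ with $\langle v, S_i\rangle > 0$ for all $i$; equivalently, the set of such supporting directions forms a (possibly empty) open arc $I \subset \S^{1}$. Under \eqref{cond H} and \eqref{cond Sym}, for any fixed $v$ the projected process $i \mapsto \langle v, S_i\rangle$ is a one-dimensional random walk with symmetric continuous increments, so by \eqref{!!},
\[
\P\bigl(\langle v, S_i\rangle > 0 \text{ for all } 1 \le i \le n\bigr) = p_n := \tfrac{(2n-1)!!}{(2n)!!}.
\]

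A direct Fubini argument in $v$ only yields the expected arc length $\E|I| = 2\pi p_n$, not the probability that $I$ is non-empty. To recover $\P(0 \notin \conv)$, I would partition $\{0 \notin \conv\}$ into events $E_1, \dots, E_n$ indexed by a random combinatorial index $K \in \{1, \dots, n\}$ for which $\P(E_k) = p_{n-k}/k$. The natural candidate for $K$ is the position of a distinguished vertex of $\conv(S_1, \dots, S_n)$ visible from the origin — for instance, the earliest time at which the supporting arc of the growing partial-walk hull $\conv(S_1, \dots, S_K)$ has already shrunk to its final configuration, or the position of a specific extremal vertex in the face visible from $0$. The factor $1/k$ would come from an exchangeability/cyclic-rotation argument applied to the first $k$ increments $X_1, \dots, X_k$ (the event $K=k$ picking out one of $k$ equally likely cyclic orderings), while the factor $p_{n-k}$ should emerge by applying Sparre Andersen to the remaining $n-k$ increments along a direction $v^* = v^*(X_1, \dots, X_k)$ measurable with respect to the first $k$ steps, so that $\langle v^*, X_{k+1}\rangle, \dots, \langle v^*, X_n\rangle$ form a symmetric continuous 1D sub-walk (independent of $\sigma(X_1,\dots,X_k)$) required to keep its partial sums above $-\langle v^*, S_k\rangle < 0$; symmetry then reduces this to the Sparre Andersen event of staying strictly positive.

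The main obstacle is setting up this decomposition precisely: identifying the right $K$, verifying the conditional independence/symmetry structure that separates the ``first $k$'' combinatorial contribution from the ``last $n-k$'' Sparre Andersen contribution, and confirming that the two factors multiply as $p_{n-k}/k$ without any residual correction. Once done, summing yields
\[
\P(0 \notin \conv(S_1, \dots, S_n)) = \sum_{k=1}^n \frac{p_{n-k}}{k} = \sum_{k=1}^n \frac{(2n-2k-1)!!}{k(2n-2k)!!},
\]
which is \eqref{symmetric}. Small-case checks ($n=1,2$ give $1$ by assumption \eqref{cond H}, $n=3$ gives $23/24$, $n=4$ gives $11/12$) would serve as useful sanity verification while pinning down $K$ and the combinatorial matching.
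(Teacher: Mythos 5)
Your plan is, in substance, the route the paper itself takes, but the step you label ``the main obstacle'' is precisely the missing idea, and without it the argument is not yet a proof. The concrete decomposition that works is this: for $k=1,\dots,n$ let $E_k^{\pm}$ be the event that $0,S_1,\dots,S_n$ all lie in the closed half-plane $H_\pm(0,S_k)$ bounded by the line through $0$ and $S_k$ (the two signs fixed by an orientation convention). Under \eqref{cond H}, $0\notin\conv(S_1,\dots,S_n)$ iff $0$ is a vertex of $\conv(0,S_1,\dots,S_n)$, in which case exactly two edges $\conv(0,S_a)$, $\conv(0,S_b)$ meet at $0$ and the hull lies in $H_+$ of one of them and in $H_-$ of the other; hence the events $E_1^+,\dots,E_n^+$ are a.s.\ disjoint with union $\{0\notin\conv(S_1,\dots,S_n)\}$, which is the partition you were looking for (your ``distinguished vertex visible from the origin''). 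Conditioning on $(X_1,\dots,X_k)$, which determines the half-plane, factors $\P(E_k^\pm)$ into $\P(S_1,\dots,S_k\in H_\pm(0,S_k))$ times the probability that the independent walk $S_{k+j}-S_k$, $1\le j\le n-k$, stays in $H_\pm(0,S_k)$; the latter equals $p_{n-k}$ by projecting onto the inward normal and applying \eqref{!!} under \eqref{cond Sym} and \eqref{cond H}. Note one point where your sketch as written would go wrong: your $v^*$ must be the normal to the chord through $0$ and $S_k$, so that $\langle v^*,S_k\rangle=0$ and the tail walk must stay above level $0$ exactly; with a generic supporting direction one gets ``stay above a strictly negative level,'' whose probability exceeds $p_{n-k}$, and the clean product $p_{n-k}/k$ would not come out.

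The genuinely absent ingredient is the first factor. ``One of $k$ equally likely cyclic orderings'' requires the cycle lemma (Lemma~1 of the paper, going back to Baxter): for points in general position with partial sums $s_1,\dots,s_k$, exactly one of the $k$ cyclic shifts of $(x_1,\dots,x_k)$ has all its partial sums in $H_\pm(0,s_k)$ --- the one starting just after the a.s.\ unique partial sum farthest on the wrong side of the boundary line. Two things must be checked: that $H_\pm(0,S_k)$ is invariant under these shifts (true because it depends on the increments only through their sum $S_k$), so that exchangeability yields $\P(S_1,\dots,S_k\in H_\pm(0,S_k)\mid S_k)=1/k$ a.s.; and that a.s.\ no difference $S_j-S_i$ with $0\le i<j\le k-1$ lies on the boundary line, which follows from \eqref{cond H} by another exchangeability argument (this is Lemma~3 of the paper). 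With these in place, $\P(E_k^\pm)=p_{n-k}/k$, summation over $k$ gives \eqref{symmetric}, and your small-case checks are consistent. So: right strategy, correct target identity and correct two ingredients, but the partition, the cycle lemma delivering the $1/k$, and the exact normal direction making the Sparre Andersen factor appear without correction all still had to be supplied.
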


Let us discuss some corollaries. Here and below we consider the asymptotics as $n \to \infty$. For two positive sequences $a_n$ and $b_n$, the notation $a_n \sim b_n$ means that $\lim_{n \to \infty} a_n/b_n=1$. 

It is not hard to obtain from \eqref{symmetric} (see Section~\ref{prop_proof} below) that
\be \label{log asympt}
\P(0 \notin \conv(S_1, \dots, S_n))  \sim \frac{\log n}{\sqrt{\pi n}}, \qquad d=2.
\ee
Note that this probability is of a higher order of asymptotics than  its one-dimensional counterpart \eqref{!!}, where
\be \label{gamma asympt}
\frac{(2n-1)!!}{(2n)!!} = \frac{\Gamma(n+1/2)}{\Gamma(1/2) \Gamma(n+1)} \sim \frac{1}{\sqrt{\pi n}}.
\ee

Further, since for symmetric random walks one has
$$
\P(0 \notin \conv(S_1, \dots, S_n)) = \P(-S_n \notin \conv(S_1-S_n, \dots, 0)) = \P(S_n \notin \conv(0, S_1, \dots, S_{n-1})),
$$
the expected {\it number of updates} of the convex hull is distribution-free and satisfies
$$
\sum_{k=1}^n \P(S_k \notin \conv(0, S_1, \dots, S_{k-1})) \sim \frac{\sqrt{n} \log n}{2 \sqrt{\pi}}, \qquad d = 2.
$$

The other quantity, which is closely related to the probabilities $\P(0 \notin \conv(S_1, \dots, S_n))$, is the {\it opening solid angle}, denoted by $\Omega_n$, of the convex hull observed from the origin. In the planar case we understand $\Omega_n$ as the arc angle, and so here $\Omega_n = 2 \pi$ if $0$ belongs to the interior of the convex hull and $\Omega_n \le \pi$ if otherwise.

It is easy to see\footnote{Indeed, consider {\it any} set $A \subset \R^d$. If $0 \notin \Intr(\conv(A))$, then by the definition of solid angle, $$\Omega(\conv(A)) := \Bigl| \Bigl \{ \frac{x}{|x|}, x \in \conv(A) \Bigr\} \Bigr| = \frac{1}{2} \int_{\S^{d-1}} \I \bigl( 0 \in \conv(A) | u^\bot \bigr) \sigma(d u),$$ hence $\frac12 - \frac{\Omega(\conv(A))}{|\S^{d-1}|}  = \frac12 \P(0 \notin \conv(A | U^\bot))$. If $0 \in \Intr(\conv(A))$, the l.h.s. in the last equality is negative (it equals $-1/2$) and the r.h.s. equals zero, and thus we have $(\frac12 - \frac{\Omega(\conv(A))}{|\S^{d-1}|} )^+ = \frac12 \P(0 \notin \conv(A | U^\bot))$.} that
\be \label{angle}
\E\,\Bigl(\frac12 - \frac{\Omega_n}{|\S^{d-1}|} \Bigr)^+ = \frac12 \P\bigl(0 \notin \conv(S_1, \dots, S_n) | U^\bot \bigr),
\ee
where: $x^+:=\max(0, x)$ for any real $x$; for any  direction $u \in \S^{d-1}$, the notation $\cdot | u^\bot$ stands for the orthogonal projection onto the hyperplane $u^\bot$ passing through the origin that is orthogonal to $u$; and $U$ is a random vector that is uniformly distributed over the unit sphere $\S^{d-1}$ and independent with the random walk $S_n$. Since for any direction $u$, $\tilde{S}_n:=S_n | u^\bot, n \ge 1,$ is a $(d-1)$-dimensional random walk which satisfies assumptions \eqref{cond H} and \eqref{cond Sym} if the $d$-dimensional walk $S_n$ does so, \eqref{angle} combined with \eqref{!!}  and \eqref{symmetric} imply the distribution-free relations
\be \label{angle 2d}
\E\,(\pi - \Omega_n)^+ =2\pi \frac{(2n-1)!!}{(2n)!!}, \qquad d=2
\ee
and
\be \label{angle 3d} 
\E\,(2\pi -\Omega_n)^+ =2\pi\sum_{k=1}^n \frac{(2n-2k-1)!!}{k \cdot (2n-2k)!!}, \qquad d=3.
\ee

Hence, under the assumptions of Theorem~\ref{main}, the conditional expected discrepancy between the opening angles of the conic hull of $\conv(S_1, \dots, S_n)$ and of a full half-plane containing the hull is also distribution-free and satisfies
$$\E\,\bigl(\pi - \Omega_n \, \bigl | \bigr. \, 0 \notin \conv(S_1, \dots, S_n) \bigr) \sim \frac{ 2\pi}{\log n}, \qquad d=2.$$ 

\medskip

The approach of the present paper does not allow one to generalize \eqref{symmetric} to higher dimensions where it gives non-sharp upper bounds; see \eqref{eq: trivial} and  \eqref{1225} in the next section. Based on numerical simulations for dimensions $d=3$ and $4$, which were further supported by \eqref{angle 3d} in dimension three, we suggested the following hypothesis.
\begin{conj}
Let $d \ge 3$, and assume that \eqref{cond H} and \eqref{cond Sym} hold. Then the probabilities $\P(0 \notin \conv(S_1,  \dots, S_n))$ are distribution-free for any $n \ge 1$.
\end{conj}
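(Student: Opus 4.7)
The starting point is a reduction to a combinatorial count via the sign-flip symmetry, exactly as in Wendel's argument. Since $X_1, \dots, X_n$ are i.i.d.\ and each is centrally symmetric by~\eqref{cond Sym}, for any deterministic sign pattern $\epsilon = (\epsilon_1, \dots, \epsilon_n) \in \{\pm 1\}^n$ we have $(\epsilon_1 X_1, \dots, \epsilon_n X_n) \overset{d}{=} (X_1, \dots, X_n)$, so
\[
\P(0 \notin \conv(S_1, \dots, S_n)) = \E \left[ \frac{1}{2^n} N(X_1, \dots, X_n) \right],
\]
where $N(X_1, \dots, X_n) := \# \{ \epsilon \in \{\pm 1\}^n : 0 \notin \conv(S_1^\epsilon, \dots, S_n^\epsilon) \}$ and $S_i^\epsilon := \sum_{j \le i} \epsilon_j X_j$. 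The conjecture would follow if $N(X_1, \dots, X_n)$ equals a constant $C_{n,d}$ depending only on $n$ and $d$ for almost every realization under~\eqref{cond H}; distribution-freeness is then automatic.

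Next I would translate the event into a hyperplane-separation problem. By convex duality, $0 \notin \conv(S_1^\epsilon, \dots, S_n^\epsilon)$ iff there exists $u \in \R^d \setminus \{0\}$ with $\langle S_i^\epsilon, u \rangle > 0$ for every $i$. Writing $y_j := \langle X_j, u\rangle$, this becomes $\sum_{j \le i} \epsilon_j y_j > 0$ for all $i$, i.e.\ the vector $y(u) = (y_1, \dots, y_n)$ lies in the open cone
\[
C_\epsilon := \Bigl\{ y \in \R^n : \sum_{j \le i} \epsilon_j y_j > 0 \text{ for all } i = 1, \dots, n \Bigr\}.
\]
As $u$ ranges over $\R^d$, the vector $y(u)$ traces out a $d$-dimensional linear subspace $V = V(X_1, \dots, X_n) \subset \R^n$ (assuming, as we may under~\eqref{cond H}, that $\Span(X_1, \dots, X_n) = \R^d$). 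Therefore the conjecture reduces to the purely combinatorial claim that, for almost every $d$-dimensional linear subspace $V \subset \R^n$, the number $\#\{\epsilon \in \{\pm 1\}^n : V \cap C_\epsilon \neq \emptyset\}$ depends only on $n$ and $d$.

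This combinatorial statement is the main obstacle, and it is precisely where the methods developed earlier in the paper do not seem to apply. Note that the $2^n$ cones $C_\epsilon$ are not the chambers of a single linear arrangement indexed by the $X_j$'s, as in Wendel; rather they are some of the top-dimensional chambers of the arrangement of all $\sum_{i=1}^n 2^{i-1}$ distinct hyperplanes of the form $\{y : \sum_{j \le i} \epsilon_j y_j = 0\}$, and the $C_\epsilon$ overlap in intricate ways, so one cannot directly invoke Zaslavsky's theorem on the number of chambers cut by a subspace. My plan would be to establish the invariance through an oriented-matroid/regularity argument: show first that as $V$ varies over a generic open set in the Grassmannian $G_{d,n}$, the indicator $\epsilon \mapsto \mathbf{1}(V \cap C_\epsilon \neq \emptyset)$ is locally constant; and then exploit the signed-permutation symmetry (combining the sign-flip action with the exchangeability of the increments, which would play the role of the cyclic-shift ingredient in Sparre Andersen's proof) to propagate the count between generic orbits and conclude that all generic $V$ give the same total. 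Once $C_{n,d}$ is identified as an almost-sure constant, its value can be read off from the Gaussian case, where the rotational invariance of the walk reduces $\tfrac12 p_n$ to the $d$-th spherical intrinsic volume of the canonical orthoscheme, as described in the introduction.
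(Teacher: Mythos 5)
You should first note that the paper contains no proof of this statement: it is posed as a \emph{conjecture}, the authors explicitly say that the approach of the paper yields only the non-sharp upper bound \eqref{eq: trivial} in dimensions $d\ge 3$, and the conjecture was resolved only in the later work \cite{KVZ15} by what the authors describe as an entirely different method. So there is nothing in this paper to compare your argument against, and your proposal must stand on its own as a proof. It does not. The reduction in your first two paragraphs is correct and is indeed the natural starting point: by the sign-flip symmetry the problem becomes showing that $N(X_1,\dots,X_n)=\#\{\epsilon : V\cap C_\epsilon\neq\emptyset\}$ (or, better, the analogous count over all $2^n n!$ signed permutations, using exchangeability as well) is an almost-sure constant, where $V\subset\R^n$ is the $d$-dimensional row space of $[X_1,\dots,X_n]$. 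But this invariance is the entire content of the theorem, and your third paragraph only gestures at it.

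Concretely, the ``locally constant, then propagate by symmetry'' plan does not close. The set of subspaces $V$ in general position with respect to the family of partial-sum hyperplanes is the complement of a real algebraic hypersurface in the Grassmannian and is therefore typically \emph{disconnected}; local constancy on this set says nothing about agreement of the count across different components. The individual indicators $\I(V\cap C_\epsilon\neq\emptyset)$ genuinely do jump as $V$ crosses a wall (this already happens for $d=2$, $n=2$), so what must be shown is that the jumps cancel in the sum over $\epsilon$ — a wall-crossing identity that you neither state nor prove. The signed-permutation action cannot substitute for this: it permutes the cones $C_{\sigma,\epsilon}$ among themselves but acts on $\R^n$, not transitively on the components of the generic stratum of the Grassmannian, so it cannot ``propagate the count between generic orbits'' in the way a single cyclic shift does in Sparre Andersen's one-dimensional argument. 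Finally, even granting the invariance, your last step is circular within this paper: the paper states that the exact values of the higher spherical intrinsic volumes $U_d(\tilde T_n)$ (equivalently, the Gaussian value of $p_n$ for $d\ge 3$) are \emph{not} accessible by its methods, so the constant $C_{n,d}$ cannot simply be ``read off from the Gaussian case'' without further work. What you have written is a correct framing of the problem plus a research plan, not a proof.
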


Since the time passed from the publication on arXiv of the first version of this paper, this conjecture was fully resolved in our subsequent paper~\cite[Theorem 2.3]{KVZ15} coauthored with Z. Kabluchko. This new work uses an entirely different method, which, however, applies only for perfectly symmetric distributions of increments. 

\subsection{Asymptotic results for general planar random walks} On the contrary, the approach of the present paper allows us, with an additional effort, to obtain an asymptotic version of Theorem~\ref{main} for {\it asymmetric} planar random walks. It also gives asymptotic upper estimates of the probabilities $\P(0 \notin \conv(S_1, \dots, S_n))$ in higher dimensions. Surprisingly, for symmetric walks these  bounds overestimate the true values merely by a constant factor, cf. \eqref{eq: trivial} and \eqref{E RW 0 assymetric} below with~\cite[Theorems 2.3 and 5.1]{KVZ15}.

We now present an asymptotic result for general random walks assuming that the increments have zero mean and a finite covariance matrix $\Sigma$. This matrix must be non-degenerate by assumption~\eqref{cond H}. For such walks,  introduce the following definitions. For any non-zero $u \in \R^d$, denote by $T(u):=\inf \{ k \ge 1:  S_k \notin H(u) \}$ the exit time from the half-space $H(u) := \{z \in \R^d: \langle z, u \rangle \ge 0 \}$, and let
\begin{equation} \label{eq: R}
R(u) := -\frac{\E \langle S_{T(u)}, u \rangle}{\sqrt{\langle \Sigma u, u \rangle }}
\end{equation} 
be the expected normalized distance from $H(u)$ to the exit point $S_{T(u)}$. It is easy to see that this function is positive and angular, that is, independent of $|u|$. We will also show that $R(u)$ is bounded. For random walks with centrally symmetric distribution of increments satisfying~\eqref{cond H}, we have $R(u) \equiv \sqrt2/2$. This becomes clear in the next paragraph if one uses that $T(u)$ is the exit time of the symmetrically distributed random walk $\langle S_n, u \rangle$ from the non-negative half-line.

In dimension one, $R(1)$ and $R(-1)$ are the expected values of ascending and descending, respectively, ladder heights of the random walk normalized by standard deviation of its increments. In this case it is well known (see the proof of Lemma~\ref{lem: half asympt} in the Appendix) that $R(\pm1)$ are finite and 
\begin{equation}
\P(S_1 \ge  0, \dots, S_n \ge 0) \sim \frac{\sqrt{2} R(1)}{\sqrt{\pi n}}, \quad d=1.
\end{equation}
This is actually true even without~\eqref{cond H} while under~\eqref{cond H}, all the inequalities above can be taken to be strict. The  asymptotic order here is the same in the symmetric case (where $R(\pm 1)=\sqrt2/2$, cf. \eqref{!!} and \eqref{gamma asympt}) but of course the probabilities are now distribution-dependent. 

\begin{theo}\label{theo: asympt}
Let $d=2$, and assume that \eqref{cond H} holds and that increments of the random walk $S_n$ have zero mean and a finite covariance matrix $\Sigma$. Then
$$
\P(0 \notin \conv(S_1,  \dots, S_n)) \sim \sqrt{2} \E R \bigl(\Sigma^{-1/2} U \bigr)  \frac{\log n}{\sqrt{\pi n}},
$$
where $U$ is a random vector distributed uniformly over the unit circle $\S^1$ and the expectation above is finite and positive.
\end{theo}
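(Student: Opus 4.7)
The strategy is to extend the Spitzer-type combinatorial decomposition underlying Theorem~\ref{main} to the asymmetric setting, and then to substitute the classical one-dimensional Sparre Andersen asymptotic into each resulting summand.

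Since $\{0 \notin \conv(S_1, \dots, S_n)\}$ is preserved by any invertible linear transformation applied pointwise to the walk, I would first pass to the standardized walk $\tilde S_k := \Sigma^{-1/2} S_k$, whose increments have identity covariance. In the standardized coordinates, a uniform $U \in \S^1$ corresponds to the direction $\Sigma^{-1/2} U / \|\Sigma^{-1/2} U\|$ in the original coordinates, which is precisely why $R(\Sigma^{-1/2} U)$ appears in the statement. It thus suffices to prove $p_n \sim \sqrt 2 \, \E R(U) \, (\log n)/\sqrt{\pi n}$ under the additional assumption $\Sigma = I$.

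The heart of the argument is to extract from the proof of Theorem~\ref{main} a decomposition identity of the schematic form
\[
p_n \; = \; \sum_{k=1}^n \frac{1}{k} \, \E \, q^{(1)}_{n-k}(V_k),
\]
where $q^{(1)}_m(v) := \P(\langle S_1, v \rangle > 0, \dots, \langle S_m, v \rangle > 0)$ is the one-dimensional persistence probability in direction $v$, and $V_k$ is a random unit vector produced by a cycle-lemma construction at time $k$. In the symmetric case, $q^{(1)}_m(v) = (2m-1)!!/(2m)!!$ is independent of $v$ and the identity collapses to \eqref{symmetric}. In the asymmetric case the cycle argument still applies, and the rotational invariance built into the construction forces $V_k \sim \mathrm{Unif}(\S^1)$ once $\Sigma = I$. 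Inserting the 1D Sparre Andersen asymptotic $q^{(1)}_m(v) \sim \sqrt 2 \, R(v)/\sqrt{\pi m}$ together with the elementary estimate $\sum_{k=1}^{n-1} 1/(k \sqrt{n-k}) \sim (\log n)/\sqrt n$ (split at $k = n/2$: the head contributes $(\log n)/\sqrt n$, the tail only $O(1/\sqrt n)$) yields the claimed asymptotic after averaging over $U$. Finiteness and positivity of $\E R(U)$ follow from $\E \|X_1\|^2 < \infty$ and the non-degeneracy of $\Sigma$ via standard ladder-height bounds.

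The main obstacle is the interchange of limit with sum and expectation in this last step, because the 1D Sparre Andersen asymptotic must hold uniformly over $v \in \S^1$. This is where the uniform Tauberian theorem flagged in the keywords enters: passing to the generating function $\sum_m q^{(1)}_m(v) t^m$, one applies a Tauberian statement whose remainder is uniform in $v$ over compact sets, then integrates over $U$, handling any exceptional directions separately via direct moment bounds on $\langle X_1, v \rangle$. A secondary nuisance is that the 1D asymptotic is not yet effective for $k$ close to $n$, but those terms contribute $O(1/\sqrt n)$ and are negligible compared to the leading $(\log n)/\sqrt n$ rate.
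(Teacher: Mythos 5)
Your overall route is the paper's route: the schematic identity $p_n = \sum_{k=1}^n \frac1k \,\E\, q^{(1)}_{n-k}(V_k)$ is exactly what one gets for $d=2$ by combining \eqref{eq: 2 faces in 2D} with the splitting of the trajectory at the chord from $0$ to $S_k$ — Lemma~\ref{Lemma prob} supplies the factor $1/k$ for the piece up to time $k$, and the remaining piece is a one-dimensional persistence probability in the direction $V_k$ normal to that chord. The uniform one-dimensional Sparre Andersen asymptotic that you correctly single out as the crux is Lemma~\ref{lem: half asympt}, proved via uniform absolute convergence of the Spitzer series and the uniform Tauberian theorem; note that under \eqref{cond H} there are no ``exceptional directions'' to treat separately, since every projection $\langle X_1,u\rangle$ is continuous with variance bounded away from zero, and the uniformity holds over all of $\S^1$.

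The genuine gap is your treatment of the law of $V_k$. You claim that ``the rotational invariance built into the construction forces $V_k \sim \mathrm{Unif}(\S^1)$ once $\Sigma = I$.'' There is no such invariance: an increment law with identity covariance need not be rotationally invariant (e.g.\ uniform on a square), and $V_k$ is, up to sign, the direction $S_k/\|S_k\|$ rotated by $\pi/2$, whose law for fixed $k$ is in general not uniform. It is only \emph{asymptotically} uniform as $k\to\infty$, by the central limit theorem, and this is precisely where $\E R(\Sigma^{-1/2}U)$ comes from in the paper: one proves $\E R(V_k) \to \E R(\Sigma^{-1/2}U)$ via the CLT and the continuous mapping theorem, using that $R$ is continuous and bounded on $\S^1$ (itself a by-product of the uniform Spitzer-series argument), uniformly over $k \ge h_n$ for any $h_n\to\infty$; the terms with $k \le c_n$ for slowly growing $c_n$ are then discarded because $R$ is bounded and $\sum_{k\le c_n} \tfrac{1}{k\sqrt{n-k}} = O(\log c_n/\sqrt n) = o(\log n/\sqrt n)$. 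Without this step your sum reads $\sum_k \tfrac1k \E R(V_k)/\sqrt{n-k}$ and the constant $\E R(U)$ cannot be pulled out. This is the content of Proposition~\ref{face_prob asympt} and its proof. A secondary, easily repaired point: splitting $\sum_k \tfrac{1}{k\sqrt{n-k}}$ at $k=n/2$ only determines the constant to within a factor $\sqrt2$; split at $\varepsilon n$ and let $\varepsilon\to 0$, as in Section~\ref{prop_proof}.
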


\subsection{Further results and references}
We give a further development to the approach used for our initial problem discussed above. This allows us to obtain new results on a very wide class of {\it geometric characteristics of convex hulls} of general (not necessarily symmetric) multidimensional random walks. In particular, we provide explicit exact formulas for {\it expected intrinsic volumes} of the convex hull. For details we refer the reader directly to Section~\ref{Sec Geom}, with its main result given in Theorem~\ref{second}, and the applications presented in Section~\ref{1324}.

Mean geometric characteristics of convex hulls of planar random walks, for example, the expected number of faces and the expected perimeter, were studied in many papers starting with Spitzer and Widom~\cite{SW} and followed by few other works that include Baxter~\cite{Baxter}, Snyder and Steele~\cite{SS}, and one of the most recent by Wade and Xu~\cite{WX}. It seems that higher-dimensional versions were first considered by Barndorff-Nielsen and Baxter~\cite{Nielsen}, whose work was overlooked by most of the followers, including ourselves. To the best of our knowledge, the probabilities $\P(0 \notin \conv(S_1, \dots, S_n))$ were not considered until the very recent works by Eldan~\cite{Eldan0} and Tikhomirov and Youssef \cite{TY14}, who obtain asymptotic estimates as dimension $d$ increases to infinity for a few special types of random walks.

The paper by Abramson et al.~\cite{Abram} gives an overview and the latest account on the very fine description of the structure of the largest convex minorants of one-dimensional random walks. There are many related papers that consider random walks as the initial step in their studies of convex hulls of continuous time L{\'e}vy processes, and of course there is a huge number of works on convex hulls of Brownian motions. These topics are beyond the scope of our paper. A lot of references can be found in Pitman and Uribe Bravo~\cite{PUB}. There is a survey of results on random convex hulls by Majumdar et al.~\cite{MCR10}.

\subsection{Structure of the paper} This paper is organised as follows. In the next section we present the main tool of our approach, a somewhat technical Proposition~\ref{face_prob}, which immediately implies Theorem~\ref{main} and its analogue for random walk bridges, Theorem~\ref{thm: bridge}. The proposition also serves as the base for the further studies of geometric properties of convex hulls. We also present an asymptotic version of Proposition~\ref{face_prob} for general random walks whose increments have zero mean and finite variance. This result is stated in Proposition~\ref{face_prob asympt} of Section~\ref{1306}, and it readily implies Theorem~\ref{theo: asympt}. 

Section~\ref{Sec Geom} contains Theorem~\ref{second}, our general result on expected geometric characteristics of convex hulls of  multidimensional random walks. This theorem also holds true for random walk bridges and more general, for partial sums of exchangeable random vectors. As the main application of Theorem~\ref{second}, we obtain explicit formulas for expected intrinsic volumes of convex hulls of random walks. In Section~\ref{1330} we consider the special case of random walks with Gaussian increments and present a number of results that explain connections with geometry. 

All the proofs are contained in the last two sections. In Section~\ref{1333} we present our combinatorial results, which are used in Section~\ref{prop_proof} to prove Proposition~\ref{face_prob} and Theorem~\ref{second}. This last section also contains the proof of  Proposition~\ref{face_prob asympt}, which is based on several rather technical statements concerning uniform convergence. The proofs of these statements are moved to the Appendix since they are very different from the purely combinatorial or semi-combinatorial arguments used throughout the paper. However, we think that a {\it uniform} version of {\it Tauberian theorem} (Theorem~\ref{thm: Uniform Tauberian} of the Appendix) should deserve some attention. To our surprise, we did not find a reference to any similar statement.

\section{The main tool for multidimensional random walks}\label{1306}
Denote by
$$
C_n:=\conv(S_0,S_1,\dots, S_n)
$$
the convex hull of the first $n$ steps \emph{including the origin} $S_0:=0$. In the following consideration we will {\it always} refer to $C_n$ as to the convex hull of the random walk. To avoid trivialities, we assume that $n \ge d$; we also recall our convention that~\eqref{cond H} is always satisfied. 

With probability one $C_n$ is a convex polytope with boundary of the form
\be \label{boundary}
\partial C_n=\bigcup_{f\in\mathcal F_n} f,
\ee
where $\mathcal F_n$ is the set of all $(d-1)$-dimensional faces of $C_n$. Almost surely, each face $f$ is a $(d-1)$-dimensional simplex of the form
\begin{equation}\label{a face}
f = \conv(S_{i_1(f)},\dots,S_{i_d(f)})
\end{equation}
for some indices $0\leq i_1(f)<\dots<i_d(f)\leq n$. It is instructive to think that $f$ is obtained by shifting the simplex with vertices $0, S_{i_2(f)}-S_{i_1(f)}, \dots, S_{i_d(f)}-S_{i_1(f)}$ by $S_{i_1(f)}$. We say that the ordered $(d-1)$-tuple $(i_2(f)-i_1(f), \dots, i_d(f)-i_1(f))$ is the {\it temporal structure} of the face and the ordered $d$-tuple $(i_1(f), \dots, i_d(f))$ is the {\it full temporal structure}.

We shall express the probability that $\mathcal F_n$ contains a face of a given temporal  or full temporal structure. In order to stress the combinatorial nature of our result, we prove it in a more general setting for the partial sums $S_k=X_1+\dots+X_k, 1 \le k \le n$, of {\it $n$-exchangeable} increments $X_1, \dots, X_n$. Recalling the definition, this means that for any permutation $\sigma$ of length $n$, $(X_{\sigma(1)}, \dots, X_{\sigma(n)})$ has the same distribution as $(X_1, \dots, X_n)$. We will assume that
\be \label{cond Gen}
\P(S_1, \dots, S_n \hbox{ are in general position})=1 \tag{G}
\ee
to ensure that the faces of $C_n$ still are simplexes with probability one. In other words, any $d$ vectors of $S_1, \dots, S_n$ are linearly independent. This is true, for example, when the exchangeable increments $X_1, \dots, X_n$ have a joint density or when they are independent (so $S_k$ is a random walk) and satisfy \eqref{cond H}. 

Our main example of partial sums with dependent $n$-exchangeable increments is a {\it random bridge} of  length $n$, where we assume by definition that the $n$-th partial sum is a.s. zero. We will be interested in {\it random walk bridges} of two types. For a random walk $S_k$, the {\it difference} bridge is the sequence $S_k - (k/n) S_n, 1 \le k \le n$; and the distribution of the {\it conditional} bridge is given by conditioning on $S_n=0$. We understand the latter as the well-defined limit of the corresponding conditional distributions $\P\bigl(\, \cdot \, \bigl| \bigr. |S_n| \le r \bigr)$  as $r \to 0+$. For example, this limit exists if the distribution of increments of the walk has continuous or bounded density and the density of the distribution of $S_n$ is positive at $0$. It is easy to see that the first $n$ values of a random walk bridge of length $n+1$ of either type satisfy~\eqref{cond Gen} if the underlying random walk satisfies~\eqref{cond H}. 

It turns out that the probability that the convex hull $C_n$ contains a face of a given {\it full} temporal structure is distribution-free for random walk bridges and for random walks  with symmetrically distributed increments. Although this probability is not distribution-free for general walks, the probability that $C_n$ contains a face of a given temporal structure is. More precisely, we have following result.   


\begin{prop}\label{face_prob}
For any $d \ge 1$, let $0 \le i_1 < \dots < i_d \le n$ be any indices.
\begin{enumerate}
\item If the partial sums $S_k$ of $n$-exchangeable random vectors in $\R^d$ satisfy \eqref{cond Gen}, then
\be \label{temporal}
\sum_{i=0}^{n-i_d+i_1} \P(\conv(S_i, S_{i+i_2-i_1},\dots,S_{i+ i_d-i_1}) \in \mathcal F_n) = \frac{2}{(i_2 - i_1) \cdot ... \cdot (i_d-i_{d-1})}.
\ee
Moreover, if $S_1, \ldots, S_n, 0$ is random bridge of length $n+1$, then
\be \label{pinned bridge}
\P(\conv(S_{i_1},\dots,S_{i_d}) \in \mathcal F_n )=\frac{2}{(i_2 - i_1) \cdot ... \cdot (i_d-i_{d-1})(n-i_d+i_1+1)}.
\ee
\item  If $S_k$ is a random walk\footnote{\label{note} As explained in Section~\ref{prop_proof}, \eqref{pinned} is true for the partial sums of $n$-exchangeable random vectors $X_1, \dots, X_n$ if \eqref{cond Gen} holds and all the $2^n$ $n$-tuples $(\pm X_1, \dots, \pm X_n)$ have the same distribution; note that Wendel's result~\eqref{conv Wendel} also holds true under these relaxed assumptions. Here is an example of such distributions: if $d=1$ so are the coordinates of any random vector $X$ in $\R^n$ with a rotationally invariant distribution. In this case $X_1, \dots, X_n$ are not i.i.d. unless $X$ is a multiple of a standard Gaussian vector.} in $\R^d$ and \eqref{cond H} and \eqref{cond Sym} hold, then
\be \label{pinned}
\P(\conv(S_{i_1},\dots,S_{i_d}) \in \mathcal F_n )=2\frac{(2i_1-1)!!}{(2i_1)!!}\frac{(2n-2i_d-1)!!}{(2n-2i_d)!!}\prod_{k=1}^{d-1}\frac{1}{i_{k+1}-i_k},
\ee
where by convention $(-1)!!=1$.
\end{enumerate}

\end{prop}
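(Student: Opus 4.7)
The plan is to reduce the geometric face event to a collection of one-sided events about one-dimensional partial sums, and then invoke the Sparre Andersen formula \eqref{!!} together with a cycle lemma for exchangeable sequences of zero sum.

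\textbf{Setup.} Under \eqref{cond Gen}, the affine hull of $\conv(S_{i_1},\ldots,S_{i_d})$ is, almost surely, a $(d-1)$-dimensional hyperplane whose unit normal $v$ is determined, up to sign, by the gap vectors $Y_k := S_{i_{k+1}} - S_{i_k}$, $k=1,\ldots,d-1$. The event $\{\conv(S_{i_1},\ldots,S_{i_d}) \in \mathcal F_n\}$ coincides, up to a sign $\varepsilon \in \{\pm 1\}$, with $\{\varepsilon\langle S_j - S_{i_1}, v\rangle > 0 \text{ for every } j \notin \{i_1,\ldots,i_d\}\}$. Split the complement into the left tail $L=\{0,\ldots,i_1-1\}$, the gaps $G_k=\{i_k+1,\ldots,i_{k+1}-1\}$, and the right tail $R=\{i_d+1,\ldots,n\}$; since $\langle Y_k,v\rangle = 0$ by definition of $v$, on each $G_k$ the $m_k := i_{k+1}-i_k$ projected increments of $X_{i_k+1},\ldots,X_{i_{k+1}}$ sum to zero.

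\textbf{Proof of \eqref{pinned}.} Under \eqref{cond H} and \eqref{cond Sym}, condition on $v$, a measurable function of the increments inside $\bigcup_k G_k$. The blocks of increments indexed by $L$, by $\bigcup_k G_k$, and by $R$ are then jointly independent. After reversing time inside $L$, the one-sided event becomes $\{T_1>0,\ldots,T_{i_1}>0\}$ for a continuous symmetric one-dimensional walk $T$, hence has probability $(2i_1-1)!!/(2i_1)!!$ by \eqref{!!}; the right tail similarly yields $(2n-2i_d-1)!!/(2n-2i_d)!!$. On each gap $G_k$, the $m_k$ projected increments are exchangeable, continuous, and sum to zero, so the Sparre Andersen / Dvoretzky--Motzkin cycle lemma (applied in general position) produces exactly one cyclic rotation with all $m_k-1$ intermediate partial sums strictly on the prescribed side, giving conditional probability $1/m_k$. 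Multiplying these independent contributions and summing over the two values $\varepsilon = \pm 1$ gives the prefactor $2$ and yields \eqref{pinned}.

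\textbf{Proof of \eqref{temporal} and \eqref{pinned bridge}.} The left-hand side of \eqref{temporal} is the expected number of faces of $C_n$ with temporal structure $(m_1,\ldots,m_{d-1})$. Using the combinatorial lemmas from Section~\ref{1333}, $n$-exchangeability of $X_1,\ldots,X_n$ lets one represent this expectation as an average over uniformly random relative positions of the marked $d$-tuple among the $n$ increments; the cycle lemma then contributes the factor $1/m_k$ within each inner block, the two admissible sides contribute the prefactor $2$, and the contribution of the outer block (of length $n-\sum_k m_k$) collapses to $1$ after the summation over the shift $i$. The bridge formula \eqref{pinned bridge} follows from the same picture: the pinning $S_n = 0$ joins the two tails into a single outer cycle of length $n-(i_d-i_1)+1$, whose uniformly random cyclic rotation is measure-preserving for an $(n+1)$-exchangeable bridge, producing the extra factor $1/(n-i_d+i_1+1)$.

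\textbf{Main obstacle.} The real difficulty is the combinatorial bookkeeping in passing from the single geometric face event to a clean product of one-sided block events. One must use \eqref{cond Gen} to turn non-strict inequalities into strict ones, verify that the two choices $\varepsilon = \pm 1$ correspond to genuinely disjoint events (no simplex is a face with both orientations at once), and explain why the \emph{same} side is forced simultaneously across all $d+1$ blocks, so that the naive product $\prod_k (2/m_k)$ collapses to $2\prod_k (1/m_k)$. These verifications are precisely what the preparatory combinatorial results of Section~\ref{1333} are designed to deliver.
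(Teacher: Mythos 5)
Your proposal is correct and follows essentially the same route as the paper: the face event is split into the two half-space events ($\varepsilon=\pm1$, giving the factor $2$), the intermediate gaps are handled by the cycle lemma for zero-sum exchangeable blocks (the paper's Lemmas~\ref{Lemma comb}--\ref{Lemma bridge}, yielding $\prod_k 1/m_k$), the two tails in the symmetric case reduce via projection onto the normal and time reversal to Sparre Andersen's formula~\eqref{!!}, and \eqref{temporal} and \eqref{pinned bridge} follow by summing over the shift (the paper formalizes your ``outer block collapses to $1$'' via the a.s.\ unique argmax of the distance to the face's hyperplane) and by closing the two tails into one cyclic outer block for the bridge. The bookkeeping points you flag as the main obstacle are exactly what the paper's Section~\ref{1333} supplies.
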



The first application of this result concerns the expected number of faces of $C_n$ that contain the origin as a vertex. Denote the set of such faces by $\mathcal F_n'$ and note that (under~\eqref{cond Gen}!)
$$
\I(\mathcal F_n' \neq \varnothing) \stackrel{\text{a.s.}}{=} \I (0 \in \partial C_n ) \stackrel{\text{a.s.}}{=} \I(0 \notin \conv(S_1,\dots, S_n) ).
$$
Then \eqref{pinned} immediately implies that for symmetric random walks,
\begin{equation}\label{1225}
\E |\mathcal F_n'|=2\sum_{1 \leq i_2 <\dots<i_d\leq n}\frac{(2n-2i_d-1)!!}{i_2 \cdot (2n-2i_d)!!}\prod_{k=2}^{d-1}\frac{1}{i_{k+1}-i_k}.
\end{equation}
This proves Theorem~\ref{main} since for $d=2$, we have
\begin{equation}
\label{eq: 2 faces in 2D}
|\mathcal F_n'|=\left\{
                 \begin{array}{ll}
                   2, & 0 \notin \conv(S_1, \dots, S_n), \\
                   0, & 0 \in \conv(S_1, \dots, S_n).
                 \end{array}
               \right.
\end{equation}
In higher dimensions, \eqref{1225} gives only an upper bound, as follows by 
\begin{equation} \label{eq: trivial}
\P(0 \notin \conv(S_1,  \dots, S_n) \le \E|\mathcal F_n'| /d.
\end{equation}

By the same reasoning, from \eqref{pinned bridge} we obtain the following version of Theorem~\ref{main} for random walk bridges. 

\begin{theo} 
\label{thm: bridge}
Let $S_1, \ldots, S_{n+1}$ be either the difference bridge or a well-defined conditional bridge (both of length $n+1$) of a random walk in $\R^2$ that satisfies~\eqref{cond H}. Then
$$\P(0 \notin \conv(S_1,  \dots, S_n)) = \sum_{k=1}^n \frac{1}{k(n-k+1)}.$$
\end{theo}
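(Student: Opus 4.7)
The plan is to mimic, essentially verbatim, the deduction in the paper that passes from Proposition~\ref{face_prob} to Theorem~\ref{main}, but now using the bridge formula \eqref{pinned bridge} in place of the symmetric-walk formula \eqref{pinned}. Specifically, I would apply Proposition~\ref{face_prob}(1) to the bridge, which is a sequence of partial sums of $(n{+}1)$-exchangeable increments, and combine it with the elementary observation \eqref{eq: 2 faces in 2D} about planar convex hulls.

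First I would check that the hypotheses of Proposition~\ref{face_prob}(1) are in force. The difference bridge $\tilde{S}_k = S_k - \frac{k}{n+1} S_{n+1}$ clearly has $(n{+}1)$-exchangeable increments because permuting $(X_1,\dots,X_{n+1})$ permutes only the partial sums while leaving $S_{n+1}$ invariant. For the conditional bridge, exchangeability follows by taking the limit $r \to 0{+}$ of the regular conditional distributions. Both bridges satisfy the general-position assumption \eqref{cond Gen}: under \eqref{cond H} the joint distribution of $(S_1,\dots,S_n)$ has a density (resp.\ a conditional density in the pinned case), and linear-subspace events have Lebesgue measure zero, so no $d=2$ of the $n+1$ vectors $S_0=0,S_1,\dots,S_n$ are collinear almost surely. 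Hence the faces of $C_n=\conv(S_0,S_1,\dots,S_n)$ are a.s.\ simplexes \eqref{a face} and \eqref{pinned bridge} applies.

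Now I would set $d=2$, $i_1=0$, $i_2=k$ in \eqref{pinned bridge} to obtain
\[
\P\bigl(\conv(S_0,S_k)\in\mathcal F_n\bigr)=\frac{2}{k(n-k+1)}, \qquad 1\le k\le n.
\]
These are precisely the edges of $C_n$ incident to the vertex $S_0=0$, and summing over $k$ gives
\[
\E|\mathcal F_n'|=\sum_{k=1}^n\frac{2}{k(n-k+1)}.
\]
Finally, the planar dichotomy \eqref{eq: 2 faces in 2D} --- valid here because the bridge satisfies \eqref{cond Gen}, so with probability one either $0$ lies in the interior of the convex hull of $S_1,\dots,S_n$ (no incident face) or it is a vertex of $C_n$ with exactly two incident edges --- yields $\E|\mathcal F_n'|=2\,\P(0\notin\conv(S_1,\dots,S_n))$, which gives the claimed formula after dividing by~$2$.

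There really is no hard step here once Proposition~\ref{face_prob}(1) is granted; the only subtle point is the bookkeeping around the conditional bridge. The minor care needed is to ensure that the limit defining the conditional bridge preserves both $(n{+}1)$-exchangeability and the general-position property, and that \eqref{eq: 2 faces in 2D} continues to hold for it; both are routine, since a.s.\ properties that hold along a sequence of conditional laws $\P(\,\cdot\mid |S_{n+1}|\le r)$ converging weakly pass to the limit.
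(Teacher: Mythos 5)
Your argument is correct and is essentially identical to the paper's: the authors likewise obtain Theorem~\ref{thm: bridge} by specializing \eqref{pinned bridge} to $d=2$, $i_1=0$, $i_2=k$, summing over $k$ to get $\E|\mathcal F_n'|$, and invoking the planar dichotomy \eqref{eq: 2 faces in 2D}. One small nitpick: \eqref{cond H} does not literally give $(S_1,\dots,S_n)$ a joint density (the increment law need only annihilate lines, not be absolutely continuous), but the general-position property \eqref{cond Gen} still follows because every fixed affine line carries zero mass, which is all that is needed.
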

We stress that no additional assumption other than \eqref{cond H} is required.

For the asymptotics, it follows that (see Section~\ref{prop_proof}) for a random walk under \eqref{cond Sym} and~\eqref{cond H},
\be \label{E RW 0}
\E_{RW} |\mathcal F_n'| \sim \frac{2 (\log n)^{d-1}}{\sqrt{\pi n}},
\ee
while for a random walk bridge of length $n+1$ (under \eqref{cond H}),
\be \label{E BR 0}
\E_{Br} |\mathcal F_n'| \sim \frac{2d (\log n)^{d-1}}{n}.
\ee

We conclude this section with an asymptotic version of Part 2 of Proposition~\ref{face_prob} with $i_1=0$ for general (not necessarily symmetric) random walks. Recall that the  function $R(u)$ was defined in~\eqref{eq: R}.

\begin{prop}\label{face_prob asympt}
Let $S_k$ be a random walk in $\R^d, d \ge 2,$ with increments that have zero mean, a finite covariance matrix $\Sigma$, and satisfy  \eqref{cond H}. Let $U$ be a random vector distributed uniformly over the unit sphere $\S^{d-1}$. Then for any sequence $h_n$ tending to infinity such that $h_n =o( n)$, we have
$$
\P(\conv(0, S_{i_2},\dots,S_{i_d}) \in \mathcal F_n ) = \Bigl(2  \sqrt{\frac{2}{\pi}} + o(1) \Bigr) \frac{ \E R(\Sigma^{-1/2} U)}{i_2 \sqrt{n - i_d+1}} \prod_{k=2}^{d-1}\frac{1}{i_{k+1}-i_k}
$$
uniformly in  $1 \le i_2 < \dots < i_d \le n$ such that $\min(i_2, i_3 -i_2, \ldots, i_d - i_{d-1}, n-i_d) \ge h_n$, and the $o(1)$ term is uniformly bounded.
\end{prop}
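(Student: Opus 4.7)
My plan is to follow the proof of Proposition~\ref{face_prob}(2): decompose the face event by the unit outward normal $u$ to the candidate face, partition the walk into segments, and reduce each segment's contribution to a one-dimensional event. For the asymmetric case the exact Sparre Andersen-type factors $(2i_1-1)!!/(2i_1)!!$ and $(2n-2i_d-1)!!/(2n-2i_d)!!$ are replaced by asymptotic expressions; since here $i_1 = 0$, only the factor for the final segment $(i_d, n]$ survives, and it is this factor that produces the $1/\sqrt{n-i_d+1}$ rate together with the constant $R(\Sigma^{-1/2}U)$.

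First, under \eqref{cond H} the affine hull of $\{0, S_{i_2}, \ldots, S_{i_d}\}$ is almost surely $(d-1)$-dimensional and admits exactly two unit normals $\pm u$, which are measurable functions of $(S_{i_2}, \ldots, S_{i_d})$. The face event is the disjoint (mod null) union of $\{\langle S_j, u\rangle \ge 0 \text{ for all } j \le n\}$ and the same event with $-u$, giving a factor of $2$. Conditioning on the skeleton $(S_{i_2}, \ldots, S_{i_d})$ fixes $u$ and renders the $d$ consecutive walk segments conditionally independent: $d-1$ random walk bridges over the intervals $(i_{k-1}, i_k]$ for $k = 2, \ldots, d$ (with $i_1 := 0$), and an unconditioned walk over $(i_d, n]$ that is independent of the skeleton.

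For each inner bridge, the projection onto $u$ is a $1$D bridge starting and ending at $0$ (since $\langle S_{i_k}, u\rangle = 0$). Because i.i.d.\ increments remain exchangeable when conditioned on their vector sum and \eqref{cond H} rules out ties a.s., the cycle lemma applied to this projection yields \emph{exactly} $1/(i_k - i_{k-1})$ for the conditional probability that the interior of the $1$D bridge stays strictly positive; taking the product gives the exact factor $1/\bigl(i_2 \prod_{k=2}^{d-1}(i_{k+1} - i_k)\bigr)$. For the outer segment, the projection onto $u$ is a $1$D walk of length $n - i_d$ with mean zero and variance $\langle \Sigma u, u\rangle$, whose probability of staying strictly positive is asymptotically $\sqrt{2/\pi}\, R(u)/\sqrt{n-i_d}$ by the $1$D Sparre Andersen theorem quoted in the introduction. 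Taking expectation over the random skeleton, the multivariate CLT sends the normalized skeleton to a $(d-1)$-tuple of i.i.d.\ $N(0, \Sigma)$ vectors, and the law of $u$ therefore converges weakly to that of $\Sigma^{-1/2} U / |\Sigma^{-1/2} U|$ for $U$ uniform on $\S^{d-1}$. Since $R$ is angular, $\E R(u) \to \E R(\Sigma^{-1/2} U)$, producing the claimed constant $2\sqrt{2/\pi}\,\E R(\Sigma^{-1/2} U)$.

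The main obstacle is \emph{uniformity}. The $1$D Sparre Andersen asymptotic must hold uniformly both in $u \in \S^{d-1}$ and in $m = n - i_d \ge h_n$, because the argument ends by integrating a $u$-dependent one-dimensional probability over the distribution of $u$ induced by the skeleton. This is precisely the purpose of the uniform Tauberian theorem stated in the Appendix: applied to the Abel transform of the sequence $\P(\text{1D projected walk stays positive for }m\text{ steps})$, it upgrades the pointwise-in-$u$ asymptotic to a uniform one. A secondary difficulty is carrying the uniform estimate through the expectation over the skeleton, which requires a continuity-in-$u$ property of $R(\cdot)$ on the sphere together with a quantitative CLT (or Skorohod coupling) ensuring that the weak convergence of $u$ is strong enough to keep the $o(1)$ error uniform in all indices satisfying $\min(i_2, i_3-i_2, \ldots, i_d - i_{d-1}, n-i_d) \ge h_n$.
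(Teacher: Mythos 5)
Your proposal is correct and follows essentially the same route as the paper: the factor $2$ from the two half-spaces bounded by the hyperplane through $0, S_{i_2},\dots,S_{i_d}$, the exact factor $1/(i_2(i_3-i_2)\cdots(i_d-i_{d-1}))$ from the cyclic-permutation (cycle lemma) argument applied conditionally on the skeleton (the paper's Lemma~\ref{Lemma prob}), the uniform-in-$u$ one-dimensional Sparre Andersen asymptotic $\sqrt{2/\pi}\,R(u)/\sqrt{n-i_d}$ for the terminal segment obtained via the uniform Spitzer series and the uniform Tauberian theorem (the paper's Lemma~\ref{lem: half asympt}), and finally the CLT with the continuous mapping theorem to pass from $\E R(u_\pm)$ to $\E R(\Sigma^{-1/2}U)$. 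The only cosmetic difference is that you re-derive the middle-segment factor bridge-by-bridge via the projected one-dimensional cycle lemma, whereas the paper invokes its $d$-dimensional Lemma~\ref{Lemma prob} in one stroke; the substance is identical, and your identification of uniformity as the genuine difficulty matches the paper's treatment.
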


Similarly to \eqref{E RW 0}, this gives (see Section~\ref{prop_proof})  the asymptotics
\begin{equation}
\label{E RW 0 assymetric}
\E_{RW} |\mathcal F_n'| \sim 2 \sqrt{2} \E R(\Sigma^{-1/2} U) \frac{ (\log n)^{d-1}}{\sqrt{\pi n}}.
\end{equation}
Then Theorem~\ref{theo: asympt} readily follows by~\eqref{eq: 2 faces in 2D}.

\section{Geometric properties of convex hulls in $\R^d$} \label{Sec Geom}
\subsection{Expected additive functionals of faces} 
For a further application of Proposition~\ref{face_prob}, we sum in \eqref{temporal} over all possible indices to obtain that the expected number of faces in the convex hull satisfies
\begin{equation}\label{E faces}
\E |\mathcal F_n|=2\sum_{\substack{j_1+\dots+j_{d-1}\leq n\\ j_1,\dots, j_{d-1}\geq1}}\frac{1}{j_1 \cdot ... \cdot j_{d-1}}.
\end{equation}
Comparing \eqref{E faces} and \eqref{pinned bridge}, we see that
$$\E^{(d)} |\mathcal{F}_n| = \sum_{k \le n} \E^{(d-1)}_{Br} |\mathcal{F}'_{k-1}|,$$
where the upper indices show the dimension, hence by \eqref{E BR 0},
\be \label{E RW}
\E |\mathcal F_n| \sim 2(\log n)^{d-1}.
\ee
We stress that these formulas are valid under \eqref{cond Gen} only, and \eqref{cond Sym} is not required.


For $d=2$, \eqref{E faces} was proved by Baxter~\cite{Baxter}. We first generalized his argument to higher dimensions, but then found a more direct and intuitive proof for Part 1 of Proposition~\ref{face_prob} presented below in Section~\ref{prop_proof}. Later we discovered that such generalization was already done by Barndorff-Nielsen and Baxter~\cite{Nielsen} who extended the proof of~\cite{Baxter}.

We followed the steps of Baxter~\cite{Baxter} and Snyder and Steele~\cite{SS} (both papers considered only the planar case) to obtain the following generalization of \eqref{E faces}. Let $g:\R^{d \times (d-1)}\to\R$ be any  non-negative Borel function. As we noted above, with probability one $C_n$ is a convex polytope with faces of the form~\eqref{a face}, hence we can represent nearly any geometric property of a face $f$ of $C_n$ in terms of $$g\left(S_{i_2(f)}-S_{i_1(f)},\dots,S_{i_d(f)}-S_{i_{d-1}(f)}\right)$$ for some symmetric function $g$. This quantity has the same expectation for all faces with the same temporal structure, and a conditional version of \eqref{temporal} (see \eqref{temporal cond} below) readily yields the following result.


\begin{theo}\label{second}
Let $S_k$ be partial sums of $n$-exchangeable random vectors in $\R^d$, $d \ge 1$. If \eqref{cond Gen} holds, then for
$$
G_n:=\sum_{f\in\mathcal F_n} g\left(S_{i_2(f)}-S_{i_1(f)},\dots,S_{i_d(f)}-S_{i_{d-1}(f)}\right),
$$
we have that
$$
\E G_n=2\sum_{1 \le i_1 < \dots < i_{d-1} \le n}\frac{\E g(S_{i_1}, S_{i_2} - S_{i_1}, \dots, S_{i_{d-1}} - S_{i_{d-2}})}{i_1 (i_2 - i_1) \cdot ... \cdot (i_{d-1} - i_{d-2})}.
$$
\end{theo}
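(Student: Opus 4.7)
The strategy is to upgrade Proposition~\ref{face_prob}(1), which is an \emph{unweighted} counting identity for the probability of a face, to a \emph{weighted} version in which the indicator is replaced by $g$ applied to the face edges. Summing over temporal structures then yields the stated formula. Throughout, write $|j|:=j_1+\cdots+j_{d-1}$ for brevity.

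\textbf{Step 1 (Grouping by temporal structure).} Every $f\in\mathcal F_n$ has a temporal structure $(j_1,\ldots,j_{d-1})$, where $j_\ell:=i_{\ell+1}(f)-i_\ell(f)$, and a shift $i:=i_1(f)\in\{0,1,\ldots,n-|j|\}$. Introduce
\begin{align*}
Y^{(i)}&:=\bigl(S_{i+j_1}-S_i,\,S_{i+j_1+j_2}-S_{i+j_1},\,\ldots,\,S_{i+|j|}-S_{i+|j|-j_{d-1}}\bigr),\\
A^{(j)}_i&:=\bigl\{\conv\bigl(S_i,S_{i+j_1},\ldots,S_{i+|j|}\bigr)\in\mathcal F_n\bigr\}.
\end{align*}
By linearity of expectation,
$$
\E G_n=\sum_{\substack{j_1,\ldots,j_{d-1}\ge1\\ j_1+\cdots+j_{d-1}\le n}}\sum_{i=0}^{n-|j|}\E\bigl[g(Y^{(i)})\,\I(A^{(j)}_i)\bigr].
$$

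\textbf{Step 2 (Conditional refinement of \eqref{temporal}).} The technical core is the identity
\begin{equation}\label{eq:wplan}
\sum_{i=0}^{n-|j|}\E\bigl[g(Y^{(i)})\,\I(A^{(j)}_i)\bigr]=\frac{2\,\E g(Y^{(0)})}{j_1 j_2\cdots j_{d-1}},
\end{equation}
which reduces to \eqref{temporal} when $g\equiv 1$. I would prove \eqref{eq:wplan} by replaying the exchangeability/combinatorial argument behind Proposition~\ref{face_prob}(1), but tracking the block sums $Y^{(i)}$ throughout. The decisive observation is that $g(Y^{(i)})$ is a measurable function of the $(d-1)$ face block sums alone, while the permutations used in the combinatorial proof — those responsible for producing the factor $2/(j_1\cdots j_{d-1})$ on the right of \eqref{temporal} — act within orbits on which those block sums are constant. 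Consequently $g$ factors out of every orbit average, so that the $g$-factor simply replaces the indicator by its expected value $\E g(Y^{(0)})$.

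\textbf{Step 3 (Assembly and reindexing).} By $n$-exchangeability, $Y^{(0)}$ has the same law as $(S_{j_1},S_{j_1+j_2}-S_{j_1},\ldots,S_{|j|}-S_{|j|-j_{d-1}})$. Inserting \eqref{eq:wplan} into Step~1 and reindexing via $i_\ell:=j_1+\cdots+j_\ell$ for $\ell=1,\ldots,d-1$ (so $j_\ell=i_\ell-i_{\ell-1}$ with $i_0:=0$, and the constraints $j_\ell\ge 1$, $j_1+\cdots+j_{d-1}\le n$ become $1\le i_1<\cdots<i_{d-1}\le n$) gives exactly
$$
\E G_n=2\sum_{1\le i_1<\cdots<i_{d-1}\le n}\frac{\E g(S_{i_1},S_{i_2}-S_{i_1},\ldots,S_{i_{d-1}}-S_{i_{d-2}})}{i_1(i_2-i_1)\cdots(i_{d-1}-i_{d-2})},
$$
as claimed.

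\textbf{Main obstacle.} The only nontrivial step is \eqref{eq:wplan}. Proving it requires opening up the combinatorial proof of Proposition~\ref{face_prob}(1) and verifying that the orbit-averaging can be carried out at the level of the $\sigma$-algebra generated by the face block sums, so that $g$ commutes with the averaging. Assumption \eqref{cond Gen} enters implicitly, ensuring that every face is a.s.\ a simplex and that temporal structures are unambiguous, measurable functions of the walk.
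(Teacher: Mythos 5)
Your proposal is correct and follows essentially the same route as the paper: the paper proves exactly your identity \eqref{eq:wplan} as a ``conditional version'' of \eqref{temporal} (its Eq.~\eqref{temporal cond}), obtained by rerunning the path-transform argument while conditioning on the face block sums --- which is possible precisely because the cyclic permutations and rearrangements used there fix those block sums, so $g$ factors out of the orbit averages (this is the second assertion of Lemma~\ref{Lemma prob}) --- and then sums over temporal structures just as in your Steps~1 and~3. The only cosmetic difference is that the paper states the weighted identity at the level of the two half-space events $H_\pm$ before combining them via \eqref{pm}, whereas you state it directly for the face event.
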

Notice that if $S_n$ is a random walk that satisfies \eqref{cond H}, then the $d-1$ arguments of $g$ in the definition of $G_n$ are independent. In this case $\E G_n$ can be written as
\begin{equation}\label{1454}
\E G_n=2\sum_{\substack{j_1+\dots+j_{d-1}\leq n\\ j_1,\dots, j_{d-1}\geq1}}\frac{\E g\left(S^{(1)}_{j_1}, S^{(2)}_{j_2}, \dots, S^{(d-1)}_{j_{d-1}}\right)}{j_1\cdot ... \cdot j_{d-1}},
\end{equation}
where $S^{(1)}_n, \dots, S^{(d-1)}_n$ are independent copies of the random walk $S_n$.

We proved Theorem~\ref{second} being unaware of the work of Barndorff-Nielsen and Baxter~\cite{Nielsen}, who gave no general statement of this type but did a similar consideration and obtained many of the results discussed in the next section as applications of Theorem~\ref{second}. Our proof uses both combinatorial and probabilistic reasoning and in our opinion, is more transparent than that of~\cite{Nielsen}.

The latter proof rests on the smart combinatorial argument proposed by Baxter~\cite{Baxter}. It is based on the simple fact that none of the $n!$ permutations of the increments $X_1, \dots, X_n$   change the distribution of the partial sums. Similarly, Wendel's proof of \eqref{conv Wendel} uses that all the $2^n$ possible $n$-tuples $(\pm X_1, \dots, \pm X_n)$ have the same distribution. If this holds true, his argument works for any random vectors $X_1, \dots, X_n$ in general position (so our assumption that $X_i$ are i.i.d. is actually superfluous). Both proofs of Baxter and Wendel rely on the corresponding properties of {\it deterministic} sequences. Sparre Andersen's original proof of~\eqref{!!} does not allow such a nice description as it combines a simple combinatorial argument with some clever counting which rests on additivity of probability. The widely-known proof of this result given by Feller~\cite[Sec. XII.6]{Feller} offers much clearer combinatorial approach but heavily uses the independence of increments.


\subsection{Applications to intrinsic volumes of convex hulls}\label{1324}
Let us give some corollaries of Theorem~\ref{second}. In this subsection we always assume that $S_n$ is a random walk that satisfies \eqref{cond H}, and impose no other conditions. As in \eqref{1454}, let $S^{(1)}_n, \dots, S^{(d)}_n$ be independent copies of the  walk $S_n$.

First of all, by considering $g(x_1,\dots, x_{d-1})\equiv1$ in \eqref{1454}, we obtain the formula~\eqref{E faces} for the expected number of faces of the convex hull of $S_n$. Less trivial applications are as follows.

\begin{cor}[Expected surface area] \label{cor: E surface}
We have that
$$
\E\Vol_{d-1}(\partial C_n)=\frac{2}{(d-1)!}\sum_{\substack{j_1+\dots+j_{d-1}\leq n\\ j_1,\dots, j_{d-1}\geq1}}\frac{\E\, \det^{1/2}\Bigl(\bigl\langle S^{(m)}_{j_m},S^{(\ell)}_{j_\ell} \bigr\rangle\Bigr)_{m, \ell=1}^{d-1} } {j_1\cdot ... \cdot j_{d-1}}.
$$
\end{cor}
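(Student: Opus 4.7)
The plan is to apply Theorem~\ref{second} in the form \eqref{1454} to a carefully chosen function $g$ that records the $(d-1)$-dimensional volume of a face from its full temporal structure.

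First I would note that by \eqref{boundary} and the fact that a.s.\ each face is a $(d-1)$-simplex of the form~\eqref{a face}, the surface area decomposes as
$$
\Vol_{d-1}(\partial C_n) = \sum_{f \in \mathcal F_n} \Vol_{d-1}(f).
$$
After translating the face $f$ so that $S_{i_1(f)}$ moves to the origin, the vertices of $f$ become $0, u_1(f), \ldots, u_{d-1}(f)$, where $u_k(f) := S_{i_{k+1}(f)} - S_{i_1(f)}$. The standard Gram (Cayley--Menger) formula for the volume of a simplex with one vertex at the origin gives
$$
\Vol_{d-1}(f) = \frac{1}{(d-1)!}\sqrt{\det\bigl(\langle u_m(f), u_\ell(f)\rangle\bigr)_{m,\ell=1}^{d-1}}.
$$

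Next, the consecutive increments $w_k(f) := S_{i_{k+1}(f)} - S_{i_k(f)}$ and the vertex vectors $u_k(f)$ are related by $u_k = w_1 + \cdots + w_k$, i.e.\ $U = L W$ with $L$ lower-triangular with unit diagonal; hence $UU^T = L (WW^T) L^T$ and $\det(\langle u_m, u_\ell\rangle) = \det(\langle w_m, w_\ell\rangle)$. Therefore the function
$$
g(x_1, \ldots, x_{d-1}) := \frac{1}{(d-1)!}\sqrt{\det\bigl(\langle x_m, x_\ell\rangle\bigr)_{m,\ell=1}^{d-1}}
$$
is symmetric in a way irrelevant here, is non-negative Borel, and satisfies $g(w_1(f), \ldots, w_{d-1}(f)) = \Vol_{d-1}(f)$. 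Thus $\Vol_{d-1}(\partial C_n) = G_n$ in the notation of Theorem~\ref{second}.

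Finally, I would apply formula \eqref{1454}. Since under \eqref{cond H} the arguments $S^{(1)}_{j_1}, \ldots, S^{(d-1)}_{j_{d-1}}$ are independent copies of the walk sampled at times $j_1, \ldots, j_{d-1}$, plugging in $g$ yields exactly
$$
\E\Vol_{d-1}(\partial C_n) = \frac{2}{(d-1)!} \sum_{\substack{j_1+\dots+j_{d-1}\leq n\\ j_1,\dots, j_{d-1}\geq1}} \frac{\E\det^{1/2}\!\bigl(\langle S^{(m)}_{j_m}, S^{(\ell)}_{j_\ell}\rangle\bigr)_{m,\ell=1}^{d-1}}{j_1 \cdots j_{d-1}}.
$$
There is no real obstacle here beyond the linear-algebraic observation that the triangular change of variables does not change the Gram determinant; once Theorem~\ref{second} is granted, the derivation is a direct substitution.
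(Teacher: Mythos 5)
Your proof is correct and follows essentially the same route as the paper: both apply Theorem~\ref{second} via \eqref{1454} with $g(x_1,\dots,x_{d-1})=\Vol_{d-1}(\conv(0,x_1,\dots,x_{d-1}))=\frac{1}{(d-1)!}\det^{1/2}\bigl(\langle x_m,x_\ell\rangle\bigr)$ evaluated at the consecutive increments of a face. The only difference is that you make explicit the unitriangular change of variables showing that the Gram determinant of the vertex vectors $u_k$ equals that of the increments $w_k$, a point the paper leaves implicit when it identifies $g(w_1,\dots,w_{d-1})$ with $\Vol_{d-1}(f)$.
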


For $d=2$ this gives a formula by Spitzer and Widom~\cite{SW} on the average perimeter:
\be \label{perimeter}
\E \Vol_1 (\partial C_n) = 2 \sum_{j=1}^n \frac{\E \|S_j\|}{j}.
\ee
The three-dimensional version of this result was first obtained by Barndorff-Nielsen and Baxter~\cite{Nielsen}.

\begin{proof}
Applying \eqref{1454} with the Gram determinant formula
\begin{equation} \label{eq: Gram}
g(x_1,\dots, x_{d-1})=\Vol_{d-1}(\conv(0,x_1,\dots,x_{d-1}))=\frac{1}{(d-1)!} \sqrt{\det\left(\langle x_m, x_l\rangle\right)_{m,l=1}^{d-1}}
\end{equation}
with $x_1, \dots, x_{d-1} \in \R^d$, we get 
\begin{multline*}
\E\Vol_{d-1}(\partial C_n)=2\sum_{\substack{j_1+\dots+j_{d-1}\leq n\\ j_1,\dots, j_{d-1}\geq1}}\frac{\E\Vol_{d-1} \left( \conv\bigl(0,S^{(1)}_{j_1}, \dots, S^{(d-1)}_{j_{d-1}}\bigr)\right)}{j_1\cdot ... \cdot j_{d-1}} \\
=\frac{2}{(d-1)!}\sum_{\substack{j_1+\dots+j_{d-1}\leq n\\ j_1,\dots, j_{d-1}\geq1}}\frac {\E\, \det^{1/2}\Bigl(\bigl\langle S^{(m)}_{j_m},S^{(\ell)}_{j_\ell} \bigr\rangle\Bigr)_{m, \ell=1}^{d-1} } {j_1\cdot ... \cdot j_{d-1}}.
\end{multline*}
\end{proof}

\begin{cor}[Expected volume]\label{1745}
We have that
\be\label{1130}
\E\Vol_d (C_n)=\frac{1}{ d!}\sum_{\substack{j_1+\dots+j_{d}\leq n\\ j_1,\dots, j_{d}\geq1}}\frac{\E\left |\det\left[S^{(1)}_{j_1}, \dots, S^{(d)}_{j_{d}}\right] \right|}{j_1\cdot ... \cdot j_{d}}.
\ee
\end{cor}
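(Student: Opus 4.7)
The plan is to compute $\E\Vol_d(C_n)$ via the star triangulation of $C_n$ from the origin. Since $0\in C_n$ by construction, the collection $\{\conv(\{0\}\cup f):f\in\mathcal{F}_n\}$ tiles $C_n$ with pairwise disjoint interiors, with any faces $f$ that happen to contain $0$ contributing degenerate simplexes of zero $d$-volume. This yields
$$
\Vol_d(C_n) = \sum_{f\in\mathcal{F}_n}\Vol_d\bigl(\conv(\{0\}\cup f)\bigr) = \frac{1}{d!}\sum_{f\in\mathcal{F}_n}\bigl|\det[S_{i_1(f)},\dots,S_{i_d(f)}]\bigr|,
$$
and by column operations each determinant equals $\bigl|\det[S_{i_1(f)},\,S_{i_2(f)}-S_{i_1(f)},\dots,S_{i_d(f)}-S_{i_{d-1}(f)}]\bigr|$, which displays the volume as a function of the $d$ consecutive gaps along the $(d+1)$-vertex path $(0,S_{i_1(f)},\dots,S_{i_d(f)})$.

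To pass to the expectation, I would use a $d$-gap extension of Theorem~\ref{second}. That theorem handles summands depending on the $d-1$ \emph{internal} gaps of a face, whereas our expression also involves the initial gap $S_{i_1(f)}-0$. The natural extension asserts that for any non-negative Borel $g:(\R^d)^d\to\R$ that vanishes whenever one of its arguments is the zero vector,
$$
\E\sum_{f\in\mathcal{F}_n} g\bigl(S_{i_1(f)},S_{i_2(f)}-S_{i_1(f)},\dots,S_{i_d(f)}-S_{i_{d-1}(f)}\bigr) = \sum_{\substack{j_1+\dots+j_d\le n\\ j_1,\dots,j_d\ge 1}}\frac{\E g\bigl(S^{(1)}_{j_1},\dots,S^{(d)}_{j_d}\bigr)}{j_1\cdots j_d},
$$
where $S^{(1)},\dots,S^{(d)}$ are independent copies of $S$; the product structure on the right comes from independence of the walk increments on the $d$ disjoint time intervals separating consecutive vertices of $(0,S_{i_1(f)},\dots,S_{i_d(f)})$. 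Applying this with $g(v_1,\dots,v_d)=|\det[v_1,\dots,v_d]|$---which vanishes whenever some $v_k=0$, so that faces with $i_1(f)=0$ harmlessly contribute nothing---and dividing by $d!$ gives~\eqref{1130}.

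The principal obstacle is justifying the extended identity. Although formally parallel to~\eqref{1454}, the initial gap $S_{i_1(f)}-0$ is not translation invariant, so the permutation/cyclic-shift counting underlying Proposition~\ref{face_prob} Part 1 must be adapted. A natural approach is to treat the augmented simplex $\conv(\{0\}\cup f)$ as a $(d+1)$-vertex configuration at times $0<i_1(f)<\dots<i_d(f)\le n$ and run the same combinatorial argument while additionally summing over the position of the initial segment $[0,i_1(f)]$; this positional summation should produce the extra $1/j_1$ factor on the right-hand side, parallel to the internal $1/j_k$ factors. A useful sanity check is $d=1$, where~\eqref{1130} reduces to the classical expected-range identity $\E\Vol_1(C_n)=\sum_{j=1}^n \E|S_j|/j$ of Spitzer--Widom type.
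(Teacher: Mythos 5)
Your star-triangulation decomposition and the reduction to $g(v_1,\dots,v_d)=\frac{1}{d!}|\det[v_1,\dots,v_d]|$ are fine, but the ``$d$-gap extension'' of Theorem~\ref{second} on which everything rests is \emph{false} as a general identity, so this is a genuine gap. Test it in $d=1$ with $n=2$: the faces of $C_2$ are the two endpoints $M_2=\max(0,S_1,S_2)$ and $m_2=\min(0,S_1,S_2)$, so your identity asserts
$$
\E\bigl[g(M_2)+g(m_2)\bigr]=\E g(S_1)+\tfrac12\,\E g(S_2)
$$
for every non-negative Borel $g$ with $g(0)=0$. Take increments uniform on $[-1,1]$ and $g(x)=\I(x>1/2)$: the left side is $\P(M_2>1/2)=3/8$, while the right side is $\P(S_1>1/2)+\tfrac12\P(S_2>1/2)=\tfrac14+\tfrac12\cdot\tfrac{9}{32}=\tfrac{25}{64}$. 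The reason the cyclic-shift argument cannot be adapted as you suggest is that the positional summation over $i_1$ in \eqref{temporal} and \eqref{temporal cond} works precisely because the summand is invariant under the rearrangements that slide the face along the trajectory; a function of the initial gap $S_{i_1(f)}-0$ is exactly what those rearrangements change. Consistently with this, \eqref{pinned} shows that for symmetric walks the probability of a face with full temporal structure $(i_1,\dots,i_d)$ carries the factors $\frac{(2i_1-1)!!}{(2i_1)!!}\frac{(2n-2i_d-1)!!}{(2n-2i_d)!!}$, not a factor $\frac{1}{i_1}$. Your $d=1$ sanity check passes only because $g(x)=|x|$ is one of the special functions for which the identity happens to hold, via Spitzer's identity $\E M_n=\sum_{j\le n}\E S_j^+/j$; it does not validate the general claim.

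The paper circumvents the obstacle with a lifting trick: append to the walk an independent, continuously distributed $(d+1)$-st coordinate, so that the apex gap becomes an \emph{internal} gap of a $(d+1)$-vertex facet of the lifted hull $\tilde C_n$, to which Theorem~\ref{second} (applied in dimension $d+1$) is directly applicable with $g(x_1,\dots,x_d)=\frac{1}{d!}|\det[x_1',\dots,x_d']|$; since $\partial\tilde C_n$ projects two-to-one onto $C_n$, the resulting factor $2$ cancels the $2$ in Theorem~\ref{second}, yielding \eqref{1130}. The target formula is of course true---so your extended identity does hold for the particular function $|\det|$---but that is a consequence of the theorem, not a route to proving it.
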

A version of this result was first obtained by Barndorff-Nielsen and Baxter~\cite{Nielsen}.
\begin{proof}
Denote by $': \R^{d+1} \to \R^d$ the projection onto the first $d$ coordinates. Let $\tilde S_n$ be any $(d+1)$-dimensional random walk such that $\tilde S_n'=S_n$ and its last coordinate is distributed continuously and independently of $S_n$. The convex hull $\tilde C_n$ of $\tilde S_n$ satisfies $(\partial \tilde C_n)' = C_n$, and the pre-image under $'$ of any point from $\Intr (C_n)$ consists of exactly two points. Together with \eqref{boundary} this gives 
\begin{equation} \label{eq: projected}
2 \Vol_d (C_n) = \sum_{f \in \tilde{\mathcal{F}}_n} \Vol_d(f'),
\end{equation}
where $\tilde{\mathcal{F}}_n$ denotes the set of faces of $\tilde C_n$. 

Each face $f \in \tilde{\mathcal{F}}_n $ a.s. is a $d$-dimensional simplex  in $\R^{d+1}$ with vertices $\tilde{S}_{i_1(f)}, \dots, \tilde{S}_{i_{d+1}(f)}$ and so $f'$ a.s. is a $d$-dimensional simplex in $\R^d$. Its volume is given by
$$\Vol_d(f') = g(\tilde{S}_{i_2(f)} - \tilde{S}_{i_1(f)}, \dots, \tilde{S}_{i_{d+1}(f)} - \tilde{S}_{i_1(f)}),$$ 
where 
$$g(x_1,\dots,x_d)=\Vol_d(\conv(0,x'_1,\dots,x'_d))=\frac{1}{d!} \bigl |\det[{x'_1},\dots, {x'_d}] \bigr|$$
is defined for $x_1, \dots, x_d \in \R^{d+1}$. The claim then follows by combining \eqref{eq: projected} with \eqref{1454} applied for $\tilde{S}_n$ and the above given $g$.
\end{proof}

The following approach unifies the examples considered above. The volume and the surface area of a convex set are the special cases of so called \emph{intrinsic volumes} $V_0,\dots,V_d$, which naturally arise as the coefficients in the {\it Steiner formula}: for any convex set $K\subset\R^d$,
$$
\Vol_d(K+rB_d)=\sum_{k=0}^d \kappa_{d-k} V_k(K) r^{d-k}, \quad r\geq 0,
$$
where $B_d$ denotes a $d$-dimensional unit ball and $\kappa_k:=\pi^{k/2}/\Gamma(\frac k 2 +1)$ is the volume of $B_k$. In particular, $V_0(K)=1, V_d(K)=\Vol_d(K)$, $2 V_{d-1}(K)=\Vol_{d-1}(\partial K)$, and $V_1(K)$ equals the mean width of $K$ divided by the constant $\frac{2 \kappa_{d-1}}{d \kappa_d}$, which is the mean width of a unit segment in $\R^d$. The last statement readily follows from the other definition of intrinsic volumes, which sometimes is called the {\it Crofton formula}:
\begin{equation}\label{1717}
V_k(K):=\binom{d}{k}\frac{\kappa_d}{\kappa_{k}\kappa_{d-k}}\int_{\mathcal L_k^d}\Vol_k (K|L)\,\,d\mu_k(L),
\end{equation}
where $\mathcal L_k^d$ is the Grassmannian of all $k$-dimensional linear subspaces of $\R^d$ equipped with the Haar probability measure $\mu_k$, and $K|L$ is the orthogonal projection of $K$ onto $L$. 

Intuitively, the $k$-th intrinsic volume of $K$ equals, up to the constant factor, the mean $k$-dimensional volume of the projection of $K$ onto a uniformly chosen random $k$-dimensional linear subspace of $\R^d$. The normalization constant $\binom{d}{k}\frac{\kappa_d}{\kappa_{k}\kappa_{d-k}}$ is chosen so that the intrinsic volumes of $K$ do not depend on whether we consider $K$ as a subset of $\R^d$ or embed it in any higher-dimensional Euclidian space. For an extensive account on integral geometry we refer the reader to the books Santal\'o~\cite{lS76} and Schneider and Weil~\cite{schneider_weil_book}.

\begin{cor}[Expected intrinsic volumes]
We have
$$
\E V_k (C_n) = \frac{1}{k!}\sum_{\substack{j_1+\dots+j_k\leq n\\ j_1,\dots, j_k\geq1}}\frac{\E\, \det^{1/2}\Bigl(\bigl\langle S^{(m)}_{j_m},S^{(\ell)}_{j_\ell} \bigr\rangle\Bigr)_{m, \ell=1}^{d-1} } {j_1\cdot ... \cdot j_k},\quad k=1,\dots,d.
$$
In particular, the Spitzer--Widom formula~\eqref{perimeter} naturally extends to any dimension:
$$
\E V_1 (C_n) = \sum_{j=1}^n \frac{\E\|S_j\|}{j}.
$$
\end{cor}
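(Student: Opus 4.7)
The plan is to combine the Crofton representation~\eqref{1717} with Corollary~\ref{1745} applied fiberwise to projections of the walk. By Tonelli,
\[
\E V_k(C_n) = \binom{d}{k}\frac{\kappa_d}{\kappa_k \kappa_{d-k}} \int_{\mathcal{L}_k^d} \E \Vol_k(C_n|L) \, d\mu_k(L).
\]
For any $L \in \mathcal{L}_k^d$, the projection $C_n|L$ is the convex hull of the $k$-dimensional random walk $(S_j|L)_{j \le n}$ together with the origin, inside $L \cong \R^k$. Assumption~\eqref{cond H} transfers from $\R^d$ to $L$ because any affine hyperplane $h \subset L$ pulls back to the affine hyperplane $h + L^\perp \subset \R^d$ of dimension $d-1$, so $\P(X_1|L \in h) = \P(X_1 \in h + L^\perp) = 0$.

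I then apply Corollary~\ref{1745} inside $L$ to obtain
\[
\E \Vol_k(C_n|L) = \frac{1}{k!} \sum_{\substack{j_1+\dots+j_k \le n \\ j_1, \ldots, j_k \ge 1}} \frac{\E \bigl|\det[S^{(1)}_{j_1}|L, \dots, S^{(k)}_{j_k}|L]\bigr|}{j_1 \cdots j_k},
\]
where $S^{(1)}, \ldots, S^{(k)}$ are independent copies of the walk and their projections remain independent. Substituting this into the previous display and exchanging the finite sum, the Haar integral, and the expectation by Tonelli, the proof reduces to the deterministic identity
\[
\binom{d}{k}\frac{\kappa_d}{\kappa_k \kappa_{d-k}} \int_{\mathcal{L}_k^d} \bigl|\det[x_1|L, \dots, x_k|L]\bigr| \, d\mu_k(L) = \det^{1/2}\bigl(\langle x_m, x_\ell\rangle\bigr)_{m,\ell=1}^k
\]
for arbitrary vectors $x_1,\dots,x_k \in \R^d$.

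To prove this identity, let $P$ be the parallelepiped $\{\sum_{i=1}^k t_i x_i : t_i \in [0,1]\}$, which lies in a subspace of dimension at most $k$. Then $\Vol_k(P|L) = \bigl|\det[x_1|L,\dots,x_k|L]\bigr|$, so the left-hand side equals $V_k(P)$ by~\eqref{1717}. Since $P$ sits in a $k$-dimensional subspace, the embedding-invariance of $V_k$ noted below~\eqref{1717} gives $V_k(P) = \Vol_k(P) = \det^{1/2}(\langle x_m, x_\ell\rangle)_{m,\ell=1}^k$, the classical Gram determinant formula for the $k$-volume of a parallelepiped. Plugging $x_m = S^{(m)}_{j_m}$ yields the claimed formula for $\E V_k(C_n)$, and the special case $k=1$ collapses at once to $\E V_1(C_n) = \sum_{j=1}^n \E\|S_j\|/j$, since the $1\times 1$ Gram determinant equals $\|S_j\|^2$.

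The only technical points are the transfer of~\eqref{cond H} to projections, handled above, and the identification of the Crofton integral of $|\det[\cdot|L]|$ with the Gram determinant, which is purely deterministic and follows from the embedding-invariance built into the normalization in~\eqref{1717}. No probabilistic input beyond Corollary~\ref{1745} is needed.
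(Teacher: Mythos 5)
Your proposal is correct and follows essentially the same route as the paper: project onto $L\in\mathcal L_k^d$, apply the expected-volume formula to the $k$-dimensional projected walk, integrate over the Grassmannian via the Crofton formula~\eqref{1717} with Fubini, and finish with the embedding-invariance of $V_k$ for $k$-dimensional bodies together with the Gram determinant formula. The only cosmetic difference is that you package the last step as a deterministic Crofton identity verified on a parallelepiped, whereas the paper applies $V_k(K)=\Vol_k(K)$ directly to the simplex $\conv\bigl(0,S^{(1)}_{j_1},\dots,S^{(k)}_{j_k}\bigr)$; these are the same computation up to the factor $1/k!$, which you track correctly.
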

These results were already used by Molchanov and Wespi~\cite[Theorem 2.3]{MolchanovWespi} to compute intrinsic volumes of the closed convex hull of a symmetric $\alpha$-stable L{\'e}vy process in $\R^d$ with $\alpha \in (1,2]$. For a standard Brownian motion, the intrinsic volumes $V_1$ and $V_2$ were found in the earlier paper by Kampf et al.~\cite{kampf_etal}.

\begin{proof}
For any $L \in \mathcal L_k^d$, the sequence $\tilde S_n:= S_n|L, n \ge 0,$ is a $k$-dimensional random walk satisfying \eqref{cond H} and its convex hull is $\tilde C_ n = C_n|L$. Hence by Corollary~\ref{1745}, one has
\begin{eqnarray*}
\E\Vol_k (C_n|L)&=&\frac{1}{ k!}\sum_{\substack{j_1+\dots+j_{k}\leq n\\ j_1,\dots, j_{k}\geq1}}\frac{\E\left |\det\left[\tilde{S}^{(1)}_{j_1}, \dots, \tilde{S}^{(d)}_{j_{k}}\right] \right|}{j_1\cdot ... \cdot j_{d}} \\
&=&\sum_{\substack{j_1+\dots+j_k\leq n\\ j_1,\dots, j_k\geq1}}\frac{\E\Vol_k \Bigl( \conv\bigl(0,S^{(1)}_{j_1}, \dots, S^{(k)}_{j_k}\bigr) \bigl | \bigr. L \Bigr)}{j_1\cdot ... \cdot j_k}.
\end{eqnarray*}
Integrate this equation over $\mathcal L_k^d$ with respect to $\mu_k$, normalize according to the definition of the intrinsic volume, and apply the Fubini theorem to both sides to get 
$$
\E V_k (C_n)= \sum_{\substack{j_1+\dots+j_k\leq n\\ j_1,\dots, j_k\geq1}}\frac{\E V_k \bigl( \conv\bigl(0,S^{(1)}_{j_1}, \dots, S^{(k)}_{j_k}\bigr)  \bigr)}{j_1\cdot ... \cdot j_k}.
$$
The linear dimension of $K= \conv\bigl(0,S^{(1)}_{j_1}, \dots, S^{(k)}_{j_k}\bigr)$, which is a convex hull of $k+1$ points, is $k$ a.s., hence $V_k(K)=\Vol_k(K)$ and the claim follows if we use the Gram determinant formula~\eqref{eq: Gram}.
\end{proof}

\section{Applications of Gaussian convex hulls to geometry}\label{1330}
In this section we always assume that $X_1, \dots, X_n$ are independent standard Gaussian vectors in $\R^d$. 

\subsection{Intrinsic volumes of canonical orthoschemes.} Consider the Gaussian random $d \times n$ matrix $A$ with the columns $X_1, \dots, X_n$. Its rows $Y_1, \dots , Y_d$ are standard Gaussian vectors in $\R^n$.
It is known that the linear span of $Y_1, \dots , Y_d$ (which are in general position with probability one) is a random $d$-dimensional linear subspace of $\R^n$ uniformly distributed on the Grassmannian $\mathcal{L}_d^n$ with respect to the Haar
probability measure. Using this fact and the Crofton formula~\eqref{1717}, it can be shown that for any convex body $K\subset\R^n$
$$
V_d(K)=\frac{(2\pi)^{d/2}}{d!\kappa_d}\E\Vol_d(\conv(\{Ax\,:\,x\in K\})).
$$
This equation is a finite-dimensional version of a general result of Sudakov~\cite{vS76} (for $d=1$) and Tsirelson~\cite{tsirelson_1, tsirelson_2, tsirelson_3} (for general $d$) on Gaussian measures in infinite-dimensional spaces.

Consider the simplex $T_n \subset \R^n$ with vertices
$$
(0,0,\dots,0),(1,0,\dots,0),(1,1,0,\dots,0),\dots,(1,\dots,1),
$$
which we call \emph{Schl\"afli canonical orthoscheme}. Such simplexes are also called {\it path-simplexes}.

Now
$$
\conv(\{Ax\,:\,x\in T_n\})=C_n,
$$
which implies that
\be\label{1634}
V_d(T_n)=\frac{(2\pi)^{d/2}}{d!\kappa_d}\E\Vol_d(C_n).
\ee
Combining this equality with~\eqref{1130}, we obtain 
$$
V_d(T_n)=\frac{(2\pi)^{d/2}}{(d!)^2\kappa_d}\sum_{\substack{j_1+\dots+j_{d}\leq n\\ j_1,\dots, j_{d}\geq1}}\frac{\E\left |\det\left[S^{(1)}_{j_1}, \dots, S^{(d)}_{j_d}\right] \right|}{j_1\cdot ... \cdot j_{d}},
$$
where $S^{(1)}_1, \dots, S^{(d)}_d$ are independent standard Gaussian random walks in $\R^d$. Let $M$ be a $d \times d$ matrix with independent standard normal entries. Then $\E |\det M| = \E \sqrt{\det (M M^\top)}$, where $M M^\top$ is a Wishart matrix whose determinant has a well known distribution and moments. Hence (see for example Kabluchko and Zaporozhets~\cite{KZ12})
$$
\E\left |\det\left[S^{(1)}_{j_1}, \dots, S^{(d)}_{j_{d}}\right] \right|= \frac{d!\kappa_d}{(2\pi)^{d/2}}\sqrt{j_1\cdot ... \cdot j_{d}},
$$
which implies that
\be\label{1449}
V_d(T_n)=\frac{1}{d!}\sum_{\substack{j_1+\dots+j_{d}\leq n\\ j_1,\dots, j_{d}\geq1}}\frac{1}{\sqrt{j_1\cdot ... \cdot j_{d}}}.
\ee

This result was first obtained by Gao and Vitale~\cite{GV01}, who considered a direct geometric approach using a formula for intrinsic volumes of convex polytopes. The simplex $T_n/\sqrt{n} \in \R^n$ is a finite-dimensional approximation of the closed convex hull $T$ of a {\it Wiener spiral\,}\footnote{This is the deterministic curve $\{ W(t), t \in [0,1] \}$, where $W$ is a standard Wiener process, in the Hilbert space of square-integrable zero mean random variables.} in a Hilbert space, which was introduced by Kolmogorov in 1940; \cite{GV01} calls $T$ the Brownian motion body. Note that $T$ is isometric to the subset of non-increasing functions of $L_2[0,1]$ that take values in $[0,1]$. Gao and Vitale~\cite{GV01} used \eqref{1449} to prove that
$$
V_d(T)=\frac{\kappa_d}{d!}.
$$
Due to Tsirelson~\cite{tsirelson_1, tsirelson_2, tsirelson_3}, the normalized $d$-th intrinsic volume of $T$ is equal to expected volume of the convex hull of a $d$-dimensional Brownian motion, see Kabluchko and Zaporozhets~\cite{KZ14} for details. The latter quantity was calculated by Eldan~\cite{rE14} using direct methods.

\subsection{Spherical intrinsic volumes of canonical orthoschemes.}
Let us consider the unit sphere $\S^n$ in $\R^{n+1}$. By saying that $K \subset \S^n$ is convex we mean that the conic hull of $K$ in $\R^{n+1}$ is convex and line-free. Following Santal\'o (see~\cite[Section~IV.4]{lS76}),  for a convex body $K$ in $\S^n$ we can use a spherical counterpart of the Crofton formula~\eqref{1717} to define
$$
U_k(K):=\frac12\int_{S^n_{n-k}}\mathbbm{1}_{\{K\cap s \neq \varnothing\}} \,d\nu_{n-k}(s),
$$
where $S^n_k$ denotes the space of $k$-dimensional great subspheres of $\S^n$ equipped with the rotationally invariant probability measure $\nu_k$. The functionals $U_k$ can be considered as {\it spherical} counterparts of   Euclidean intrinsic volumes $V_k$. However, there are other possible definition of spherical intrinsic volumes. For basic facts from spherical integral geometry we refer the reader to Gao et al.~\cite{GHS03}, McCoy and Tropp~\cite{MT14}, and Schneider and Weil~\cite[Sec. 6.5]{schneider_weil_book}.

Similarly to \eqref{1634}, it can be shown (see G\"otze at al.~\cite{GKZ15} for details) that
$$
U_d(\tilde T_n)=\frac12\P(0\in C_n),
$$
where $\tilde T_n$ denotes the intersection of the conic hull of $T_n$ with $\S^{n-1}$. To eliminate any misunderstanding, by the spherical intrinsic volumes of the canonical orthoscheme $T_n$ mentioned in the abstract and Section~\ref{Sec: Intro} we meant exactly $U_d(\tilde T_n)$. It follows from~\eqref{!!} and Theorem~\ref{main} that
$$
U_1(\tilde T_n)=\frac12-\frac{(2n-1)!!}{(2n)!!},\quad U_2(\tilde T_n)=\frac12-\sum_{k=1}^n \frac{(2n-2k-1)!!}{2k \cdot (2n-2k)!!}.
$$
As we explained in the introduction, the exact values  of the other spherical intrinsic volumes of $\tilde T_n$ are not accessible by the method of this paper. They are available in our most recent  work~\cite{KVZ15} coauthored with Z. Kabluchko, where $U_k$ are referred to as half-tail functionals. 

\section{Combinatorial arguments}\label{1333}

For any $x_1, \dots, x_n \in \R^d$, denote by
$$
s_0:=0, \, s_k:=x_1+\dots + x_k, \quad  k=1,\dots,n,
$$
the sequence of partial sums. For any permutation $\sigma=(\sigma(1),\dots,\sigma(n))$, denote
$$
s_0(\sigma) := 0, s_k(\sigma):=x_{\sigma(1)}+\dots+x_{\sigma(k)}, \,\, k=1,\dots, n.
$$


We first proved a simple combinatorial statement, which generalizes two-dimensional Lemma~1 from Baxter~\cite{Baxter} to higher dimensions. Later we found this result in the paper by Barndorff-Nielsen and Baxter~\cite{Nielsen}. The one-dimensional version is known as the ``cycle lemma'', for example, see Steele~\cite[Section 4]{Steele2002} and references therein for further combinatorial applications. For the reader's convenience we present the proof here.

\begin{lem}\label{Lemma comb}
Let $x_0, x_1,\dots,x_n\in \R^d$, and let $H$ be a closed half-space such that
$$
x_0, x_0+ s_n \in \partial H \quad \mbox{and} \quad x_0 + s_j - s_i \notin \partial H, \, 0\le i < j \le n-1.
$$
There exists exactly one cyclic permutation $\sigma=(k+1,\dots,n,1,\dots,k)$ such that
$$
x_0, x_0 + s_1(\sigma),\dots, x_0 + s_n(\sigma)\in H.
$$
\end{lem}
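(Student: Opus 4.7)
My plan is to project the whole setup onto the inward normal of $H$ and thereby reduce the claim to the classical one-dimensional ``cycle lemma'' on real sequences. Concretely, I would write $H = \{y \in \R^d : \langle y, u \rangle \ge c\}$ for some nonzero $u \in \R^d$ and some $c \in \R$, and set $T_k := \langle s_k, u \rangle$ for $0 \le k \le n$, so that $T_0 = 0$. The hypotheses $x_0, x_0 + s_n \in \partial H$ translate to $\langle x_0, u\rangle = c$ and $T_n = 0$, while the non-boundary condition $x_0 + s_j - s_i \notin \partial H$ for $0 \le i < j \le n - 1$ becomes $T_i \ne T_j$ for all such $i, j$. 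In particular, the values $T_0, T_1, \dots, T_{n-1}$ are pairwise distinct.

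Next I would read off the partial sums under the cyclic shift $\sigma = (k+1, \dots, n, 1, \dots, k)$: a direct calculation gives $s_j(\sigma) = s_{k+j} - s_k$ for $j \le n - k$, and $s_j(\sigma) = (s_n - s_k) + s_{j - (n - k)}$ for $j > n - k$. Applying $\langle \cdot, u \rangle$ and using $T_n = 0$, both formulas collapse to $\langle s_j(\sigma), u \rangle = T_m - T_k$, where $j \mapsto m$ is a bijection of $\{1, \dots, n\}$ with itself. Consequently, $x_0 + s_j(\sigma) \in H$ for every $j \in \{1, \dots, n\}$ is equivalent to $T_k \le T_m$ for every $m \in \{0, 1, \dots, n\}$ (the constraint $m = 0$ entering via the case $j = n$ and the identity $T_n = T_0$); in other words, $T_k$ is a minimum of $T_0, T_1, \dots, T_n$.

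To finish, I would invoke the pairwise distinctness of $T_0, \dots, T_{n-1}$ (together with $T_n = T_0$) to conclude that the minimum of $T_0, \dots, T_n$ is attained at a unique index $k \in \{0, 1, \dots, n - 1\}$, yielding the unique cyclic permutation claimed. The heart of the proof is thus the well-known one-dimensional cycle lemma, applied to the sequence $T_0, T_1, \dots, T_n$. The only step that requires some care is the indexing translation in the second paragraph, which is precisely where the ``generic position'' hypothesis on $H$ (through the distinctness of the $T_i$) is needed to ensure uniqueness rather than mere existence.
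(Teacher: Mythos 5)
Your proof is correct and is essentially the paper's own argument: the paper selects the unique point $x_0+s_k$ among $\{x_0+s_i\}_{i=0}^{n-1}$ not in $\Intr(H)$ at maximal distance from $\partial H$, which is exactly your unique minimizer of the projections $T_i=\langle s_i,u\rangle$ onto the inward normal, with uniqueness supplied in both cases by the non-boundary hypothesis. Your version merely makes the reduction to the one-dimensional cycle lemma explicit, but the decomposition and the key step are the same.
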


\begin{proof}
By the asumption, there exists exactly one point $x_0 + s_k$ among $\{ x_0 + s_i \}_{i=0}^{n-1} \cap (\Intr(H))^c$ that is
at the maximum distance (possibly zero) from $\partial H$. Then $\sigma:=(k+1,\dots,n,1,\dots,k)$ is a required permutation, and it is unique by the uniqueness of $k$.
\end{proof}

Our next goal is to obtain stochastic versions of this result. For any points $x_1,\dots, x_d \in \R^d$, define
$$H_\pm(x_1,\dots, x_d):=\{z \in \R^d: \pm \det [ x_2 - x_1, \dots,  x_d - x_1, z - x_1 ] \ge 0 \}.$$
If there is a unique hyperplane through these points, then this definition gives a rule to distinguish between the two half-spaces $H_+$ and $H_-$ lying on different sides of the hyperplane. If such a hyperplane is not unique, then $H_\pm = \R^d.$

\begin{lem}\label{Lemma prob}
Assume that the partial sums $S_k$ of $n$-exchangeable random vectors $X_1, \dots, X_n$ in $ \R^d, d \ge 2,$ satisfy \eqref{cond Gen}. For any indices $1 \le i_1 < \dots < i_{d-2} \le n-1$, we have
$$\P \bigl(S_1, \dots, S_n \in H_\pm(0, S_{i_1}, \dots, S_{i_{d-2}}, S_n) \bigr) = \frac{1}{i_1 (i_2-i_1) \cdot ... \cdot (n -i_{d-2})},$$ and moreover,
$$\P \bigl(S_1, \dots, S_n \in H_\pm(0, S_{i_1}, \dots, S_{i_{d-2}}, S_n) \bigl| \bigr. S_{i_1}, \dots, S_{i_{d-2}}, S_n \bigr) = \frac{1}{i_1 (i_2-i_1) \cdot ... \cdot (n -i_{d-2})} \text{ a.s.}$$
\end{lem}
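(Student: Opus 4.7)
The plan is to reduce the statement to $d-1$ essentially independent one-dimensional applications of the cycle lemma. Set $i_0 := 0$ and $i_{d-1} := n$, and partition $\{1,\dots,n\}$ into blocks $B_j := \{i_{j-1}+1,\dots,i_j\}$ of sizes $m_j := i_j - i_{j-1}$. Under \eqref{cond Gen} the points $0, S_{i_1}, \dots, S_{i_{d-2}}, S_n$ almost surely span a random hyperplane $\pi$ through the origin; choose a linear functional $\ell$ with $\ker \ell = \pi$, normalized so that $H_+ = \{\ell \ge 0\}$ and $H_- = \{\ell \le 0\}$. The process $\tilde{S}_k := \ell(S_k)$ is then a one-dimensional path that vanishes at $k = i_0, \dots, i_{d-1}$ and, by \eqref{cond Gen}, at no other $k$. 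The event of interest splits accordingly as $\{S_1, \dots, S_n \in H_+\} = \bigcap_{j=1}^{d-1} E_j$, where $E_j := \{\tilde{S}_k \ge 0 \text{ for all } k \in B_j\}$.

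The key step is to condition on the $\sigma$-algebra $\mathcal{G}$ generated by $(S_{i_1}, \dots, S_{i_{d-2}}, S_n)$ together with the unordered multisets $\{X_i : i \in B_j\}$ for $j = 1, \dots, d-1$. By $n$-exchangeability, any permutation of $(X_1, \dots, X_n)$ that permutes only within blocks preserves $\mathcal{G}$ and the joint law, so conditionally on $\mathcal{G}$ the orderings inside the $d-1$ blocks are independent and uniform on the symmetric groups $\mathfrak{S}_{m_j}$. Since $\pi$ is $\mathcal{G}$-measurable, I apply Lemma~\ref{Lemma comb} separately inside each block with $x_0 := S_{i_{j-1}}$ and $H := H_+$: the block endpoints lie on $\pi = \partial H_+$ by construction, and by \eqref{cond Gen} no proper interval sum inside the block lies on $\pi$, so the hypotheses hold almost surely. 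The lemma asserts that exactly one of the $m_j$ cyclic shifts of any ordering of $B_j$ satisfies $E_j$, so precisely $(m_j - 1)!$ of the $m_j!$ orderings are good, giving $\P(E_j \mid \mathcal{G}) = 1/m_j$ a.s. Conditional independence across blocks now yields
\begin{equation*}
\P\bigl(E_1 \cap \dots \cap E_{d-1} \bigm| \mathcal{G}\bigr) = \prod_{j=1}^{d-1} \frac{1}{m_j} = \frac{1}{i_1 (i_2 - i_1) \cdots (n - i_{d-2})} \quad \text{a.s.}
\end{equation*}
Since the right-hand side is deterministic, conditioning with respect to the coarser $\sigma$-algebra generated by $(S_{i_1}, \dots, S_{i_{d-2}}, S_n)$ alone gives the conditional identity stated in the lemma via the tower property, and a further expectation yields the unconditional one. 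The case of $H_-$ is symmetric (replace $\ell$ by $-\ell$).

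The main obstacle will be a careful formalization of step two, namely the conditional independence and uniform distribution of within-block orderings given $\mathcal{G}$; this is a direct consequence of $n$-exchangeability but requires some bookkeeping to verify that pushing a within-block permutation through the joint law leaves $\mathcal{G}$ untouched. A minor technicality is verifying almost surely that the hypotheses of Lemma~\ref{Lemma comb} hold inside each block: one needs $S_v - S_u \notin \pi$ for all $i_{j-1} \le u < v \le i_j$ with $(u, v) \ne (i_{j-1}, i_j)$, which follows from \eqref{cond Gen} by standard linear-algebra arguments, since $\pi$ is spanned by $d-1$ of the vectors $S_1, \dots, S_n$.
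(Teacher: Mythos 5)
Your overall strategy is the same as the paper's: split $\{1,\dots,n\}$ into the blocks determined by $i_0=0<i_1<\dots<i_{d-2}<i_{d-1}=n$, observe that the defining points of $H_\pm$ are invariant under within-block rearrangements of the increments, and apply the cycle lemma (Lemma~\ref{Lemma comb}) once per block. The paper implements this with the set $\mathcal S$ of $\prod_j(i_j-i_{j-1})$ products of within-block cyclic shifts and the identity that exactly one $\sigma\in\mathcal S$ puts the whole path in $H_\pm$; your conditioning on the within-block multisets plus conditional uniformity and independence of the orderings is the same argument with a bit more bookkeeping, and it does deliver the conditional statement correctly.

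There is, however, one genuine gap: your claim that the hypothesis of Lemma~\ref{Lemma comb}, namely $S_v-S_u\notin\pi$ for the non-endpoint pairs inside a block, ``follows from \eqref{cond Gen} by standard linear-algebra arguments.'' It does not. The relation $S_v-S_u\in\pi=\Span(S_{i_1},\dots,S_{i_{d-2}},S_n)$ is a linear dependence among the $d+1$ vectors $S_u,S_v,S_{i_1},\dots,S_{i_{d-2}},S_n$, and \eqref{cond Gen} only forbids dependences among $d$ of the partial sums; $d+1$ vectors in $\R^d$ are always dependent, so no deterministic conclusion is available. A concrete counterexample with $d=2$, $n=3$: $S_1=(1,0)$, $S_2=(2,1)$, $S_3=(1,1)$ are pairwise linearly independent (so in general position), yet $S_2-S_1=S_3\in\Span(S_3)=\pi$. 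The correct verification is probabilistic and uses exchangeability essentially, exactly as in the paper's proof: if $S_{i_j}+S_\ell-S_m\in\pi$ with positive probability for some $i_j\le m<\ell<i_{j+1}$, apply the within-block permutation that moves the increments $X_{m+1},\dots,X_\ell$ to the positions immediately after $i_j$; this turns the offending point into an actual partial sum $S_{i_j+\ell-m}(\sigma)$ of the permuted walk while fixing $S_{i_1},\dots,S_{i_{d-2}},S_n$, producing $d$ partial sums of $S(\sigma)$ on a hyperplane through the origin. Since $S(\sigma)\stackrel{d}{=}S$, this contradicts \eqref{cond Gen}. With this repair (which fits naturally inside your conditioning on $\mathcal G$, since the permutation is within-block), the rest of your proof goes through.
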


This is a little generalization of the well-known fact that the trajectory of any continuously distributed one-dimensional random walk $S_n$ lies above the line joining $(0,0)$ and $(n, S_n)$ with probability $1/n$, see Feller~\cite[Sec. XII.9]{Feller}. The fact follows from Lemma~\ref{Lemma prob} if we consider the two-dimensional walk $\tilde{S}_n:=(n, S_n)$ with deterministic first component.

\begin{proof}
With probability one, there exists a unique half-plane through $0, S_{i_1}, \dots, S_{i_{d-2}}, S_n$ (otherwise we could add any other point $S_k$ and arrive at a contradiction with \eqref{cond Gen}.) Hence almost surely,
$$H_\pm(S):=H_\pm(0, S_{i_1}, \dots, S_{i_{d-2}}, S_n)$$ are half-spaces. 

For any permutation $\sigma=(\sigma(1),\dots,\sigma(n))$, introduce the partial sums
$$
S_0(\sigma) := 0, \, S_k(\sigma):=X_{\sigma(1)}+\dots+X_{\sigma(k)}, \quad k=1,\dots, n.
$$
Put $i_0:=0, i_{d-1}:=n$ and denote by $\mathcal S$ the set of $(i_1 - i_0) \cdot ... \cdot (i_{d-1} -i_{d-2})$ permutations of length $n$ that are products over $j$ from $1$ to $d-1$ of cyclic permutations of the form
\be \label{cyclic}
(k_j+1, \dots, i_j, i_{j-1} + 1, \dots, k_j),
\ee
where $i_{j-1} + 1 \le k_j \le i_j$. Note that any $\sigma \in \mathcal S$ does not change $H_\pm$, i.e. $H_\pm(S)=H_\pm(S(\sigma))$, since for every $k \in \{ i_1, \dots, i_{d-2}, n\}$ one has $S_k = S_k(\sigma)$, and the sequences of partial sums $S$ and $S(\sigma)$ have the same distribution by the exchangeability of the increments.

For any $0 \le j \le d-2$, the random vectors $S_{i_j}, X_{i_j + 1}, \dots, X_{i_{j+1}}$ and the half-space $H_\pm(0, S_{i_1}, \dots, S_{i_{d-2}}, S_n)$ satisfy the assumption of Lemma~\ref{Lemma comb} with probability one. Indeed, if for some $i_j \le m < \ell < i_{j+1}$, one has $S_{i_j} + S_\ell - S_m \in \partial H_\pm(S)$ with positive probability, then among the partial sums $S_k(\sigma)$ with 
$$\sigma=(1, \dots, i_j, m + 1, \dots, \ell, i_j+1, i_j +m, \ell +1, \dots, n)$$ there are $d$ points $S_{i_1}(\sigma), \dots, S_{i_{d-1}}(\sigma), S_{i_j+\ell-m}(\sigma)$ that belong to the hyperplane $\partial H_\pm$ passing through $0$, which contradicts \eqref{cond Gen} by the exchangeability of  increments.

By Lemma~\ref{Lemma comb}, there exists an a.s. unique random permutation $\sigma_\pm=\sigma_\pm(S) \in \mathcal S$ such that $S_1(\sigma_\pm), \dots, S_n(\sigma_\pm) \in H_\pm(S(\sigma_\pm)) = H_\pm(S)$. Hence the sum in r.h.s. of the equality
\be \label{perm} \P \bigl(S_1, \dots, S_n \in H_\pm(S_{i_0}, S_{i_1}, \dots, S_{i_{d-1}}) \bigr) = \frac{1}{|\mathcal{S}|} \E \biggl [ \sum_{\sigma \in \mathcal S} \I \bigl(S_1(\sigma), \dots, S_n(\sigma) \in H_\pm(S) \bigr) \biggr ]
\ee
equals one a.s.. This proves the first assertion of the lemma. Similarly, for any non-negative Borel function $g:\R^{d \times (d-1)} \to \R$, we have
\beaa
&&\E \Bigl[ g(S_{i_1}, \dots, S_{i_{d-2}}, S_n) \I \bigl(S_1, \dots, S_n \in H_\pm(S_{i_0}, S_{i_1}, \dots, S_{i_{d-1}}) \bigr) \Bigr]\\
&=& \frac{1}{|\mathcal{S}|} \E \Bigl [ {g(S_{i_1}, \dots, S_{i_{d-2}}, S_n)} \sum_{\sigma \in \mathcal S} \I \bigl(S_1(\sigma), \dots, S_n(\sigma) \in H_\pm(S) \bigr) \Bigr]\\
&=& \frac{1}{|\mathcal{S}|} \E g(S_{i_1}, \dots, S_{i_{d-2}}, S_n),
\eeaa
and the second claim of the lemma follows by the definition of conditional expectation.
\end{proof}

We conclude this section with a result on random bridges. 

\begin{lem} \label{Lemma bridge}
Let $S_k$ be a random bridge of length $n+1$ in $\R^d, d \ge 2,$ such that $S_1, \dots, S_n$ satisfy~\eqref{cond Gen}. For any indices $0 \le i_1 < \dots < i_{d-1} \le n $, we have
$$\P \bigl(S_1, \dots, S_n \in H_\pm(0, S_{i_1}, \dots, S_{i_{d-1}}) \bigr) = \frac{1}{i_1 (i_2-i_1) \cdot ... \cdot (n -i_{d-1}+1)}.$$
The above also holds true if $S_k$ is either the difference bridge or a well-defined conditional bridge of a random walk in $\R^d$ that satisfies~\eqref{cond H}. 
\end{lem}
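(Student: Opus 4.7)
The plan is to parallel the proof of Lemma~\ref{Lemma prob}, exploiting the cyclic structure of the $(n+1)$-exchangeable bridge increments $X_1, \dots, X_{n+1}$ that comes from the closure condition $S_{n+1} = S_0 = 0$. Setting $i_0 := 0$ and $i_d := n+1$, I interpret the anchors on the hyperplane $\partial H$ (where $H := H_\pm(0, S_{i_1}, \dots, S_{i_{d-1}})$) as $\{S_{i_0}, S_{i_1}, \dots, S_{i_{d-1}}\}$, with the understanding that $S_{i_d} = S_{i_0} = 0$ is the same anchor reached by going once around the cyclic bridge. Assumption~\eqref{cond Gen} on $S_1, \dots, S_n$ ensures that $0, S_{i_1}, \dots, S_{i_{d-1}}$ are a.s.\ affinely independent, so $\partial H$ is a genuine hyperplane and no other $S_k$ lies on it almost surely.

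Next, let $\mathcal{S}$ denote the set of $|\mathcal{S}| = i_1 (i_2 - i_1) \cdots (n+1 - i_{d-1})$ permutations of $\{1, \dots, n+1\}$ obtained as products, over $j = 1, \dots, d$, of cyclic permutations of the form \eqref{cyclic} acting on the block $(i_{j-1}, i_j]$. Each $\sigma \in \mathcal{S}$ fixes every anchor partial sum $S_{i_k}$, hence preserves $H$, and by $(n+1)$-exchangeability the sequence $(X_{\sigma(1)}, \dots, X_{\sigma(n+1)})$ shares its joint law with $(X_1, \dots, X_{n+1})$.

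The heart of the argument is to show that exactly one $\sigma \in \mathcal{S}$ satisfies $S_1(\sigma), \dots, S_n(\sigma) \in H$. For each arc $[i_{j-1}, i_j]$ I would apply Lemma~\ref{Lemma comb} to the translated half-space $H - S_{i_{j-1}}$ (whose bounding hyperplane contains both $0$ and $S_{i_j} - S_{i_{j-1}}$), using~\eqref{cond Gen} on $S_1, \dots, S_n$ to preclude any intermediate arc partial sum from lying on $\partial H$; this yields a unique cyclic shift within the arc placing all its intermediate partial sums into $H$. Multiplying these $d$ unique shifts gives a unique $\sigma \in \mathcal{S}$ with the required property, and
$$|\mathcal{S}| \cdot \P(S_1, \dots, S_n \in H) = \E \sum_{\sigma \in \mathcal{S}} \I\bigl(S_1(\sigma), \dots, S_n(\sigma) \in H\bigr) = 1$$
yields the stated formula.

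The second assertion reduces to checking that the difference bridge $S_k - \tfrac{k}{n+1} S_{n+1}$ and the conditional bridge of a walk satisfying~\eqref{cond H} produce partial sums with $(n+1)$-exchangeable increments and satisfy~\eqref{cond Gen} on the first $n$ values---standard observations already noted in the paper immediately after~\eqref{cond Gen}. I anticipate the main obstacle to be the conceptual step of viewing $S_0$ and $S_{n+1}$ as the same anchor on a \emph{circular} bridge, which is what makes the $d$ arcs and the permutation set $\mathcal{S}$ carry a truly cyclic symmetry; once this is made precise, the rest is an arc-by-arc application of the cycle lemma, entirely analogous to Lemma~\ref{Lemma prob}.
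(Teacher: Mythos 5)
Your proposal is correct and matches the paper's own proof: the authors likewise treat $S_0=S_{n+1}=0$ as the closing anchor, take $\mathcal S$ to be the products of $d$ blockwise cyclic permutations of length $n+1$ (so $|\mathcal S|=i_1(i_2-i_1)\cdots(n+1-i_{d-1})$), and rerun the argument of Lemma~\ref{Lemma prob} verbatim, including the exchangeability trick to rule out intermediate cyclic-shift partial sums landing on $\partial H_\pm$ and the verification that both types of random walk bridge satisfy~\eqref{cond Gen}.
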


This is a multidimensional counterpart of the fact that in dimension one, a bridge of length $n$   of a continuously distributed random walk stays positive with probability $1/n$.

\begin{proof}
As we already mentioned in Section~\ref{1306}, by our understanding of the conditioning it is clear that a conditional random walk bridge satisfies~\eqref{cond Gen} if the increments of the underlying random walk satisfies~\eqref{cond H}. It is also easy to see that the difference bridge of such random walk satisfies~\eqref{cond Gen}. Thus the latter assumptions holds true in all cases.

By repeating the argument used in the proof of Lemma~\ref{Lemma prob}, we see that \eqref{perm} holds for $i_0=0$ and $\mathcal S$ defined to be the set of permutations of length $n+1$ that are products of $d$ cyclic permutations of the form \eqref{cyclic}, where $0 \le j \le d-1$ and $i_d=n+1$.
\end{proof}

\section{Proofs} \label{prop_proof}

{\bf Proof of Proposition~\ref{face_prob}.} Recall that $0 \leq i_1<\dots<i_d\leq n$. By \eqref{boundary} and \eqref{a face},
\begin{multline} \label{pm} 
\P(\conv(S_{i_1},  \dots, S_{i_d}) \in \mathcal F_n) =  \P(0, S_1, \dots, S_n \in H_+(S_{i_1}, \dots, S_{i_d}) )  \\
+ \P(0, S_1, \dots, S_n \in H_-(S_{i_1}, \dots, S_{i_d}) ). 
\end{multline}
Denote $$H_\pm:=H_\pm(0, S_{i_2} - S_{i_1}, \dots, S_{i_d} - S_{i_1}),$$
then by $S_{i_d} - S_{i_1} \in \partial H_\pm$, we have
\begin{equation} \label{eq: hyperplanes}
H_+(S_{i_1}, \dots, S_{i_d}) = H_\pm + S_{i_1} = H_\pm + S_{i_d}. 
\end{equation}

\underline{Proof of \eqref{pinned bridge}.} Here $S_k$, $1 \le k \le n+1$, with $S_{n+1}:=0$, is a random bridge of length $n+1$. Let us consider the transformation that translates the whole trajectory of the bridge by moving the origin to $S_{i_1}$.  
The transformed trajectory corresponds to the random bridge of partial sums $S_k(\sigma)$, $1 \le k \le n+1$, with $\sigma=(i_1+1, \dots, n +1, 1, \dots, i_1)$. By the first equality in \eqref{eq: hyperplanes}, we have
\begin{eqnarray*}
&&\bigl \{ 0, S_1, \dots, S_n \in H_\pm (S_{i_1}, \dots, S_{i_d}) \bigr \} \\
&=& \bigl \{ S_{n+1} - S_{i_1}, S_{n+1} - S_{i_1} + S_1, \dots, 0, S_{i_1 +1 } - S_{i_1}, \ldots, S_n - S_{i_1} \in H_\pm  \bigr \}\\
&=& \bigl \{ S_{n+1 - i_1}(\sigma), S_{n+2 - i_1}(\sigma), \dots, 0, S_1(\sigma), \ldots, S_{n-i_1}(\sigma) \in H_\pm(0, S_{i_2 - i_1}(\sigma), \ldots,  S_{i_d - i_1}(\sigma))  \bigr \},
\end{eqnarray*}
hence
$$\P(0, S_1, \dots, S_n \in H_\pm (S_{i_1}, \dots, S_{i_d}) ) = \P(S_1, \dots, S_n \in H_\pm (0, S_{i_2 - i_1}, \dots, S_{i_d - i_1}) ),$$
and then \eqref{pinned bridge} follows by \eqref{pm} and Lemma~\ref{Lemma bridge}.

\underline{Proof of  \eqref{pinned}.} Let us split the trajectory of $S_n$ into three parts: by \eqref{eq: hyperplanes}, we have
\begin{align}
&\bigl \{0, S_1, \dots, S_n \in H_\pm(S_{i_1}, \dots, S_{i_d}) \bigr \} \notag \\
&=\bigl \{0, S_1, \dots, S_{i_1} \in H_\pm + S_{i_1} ; \, S_{i_1 + 1}, \dots, S_{i_d} \in H_\pm + S_{i_1} \, ; \, S_{i_d +1}, \dots, S_n  \in H_\pm + S_{i_d} \bigr \}. \label{three parts}
\end{align}
Since $S_n$ is a random walk, by conditioning on $X_{i_1+1}, \dots, X_{i_d}$, which define $H_\pm$, and using the independence of increments, we obtain that
\begin{multline}
\label{eq: three parts}
\P \bigl (0, S_1, \dots, S_n \in H_\pm(S_{i_1}, \dots, S_{i_d}) \bigr ) \\
= \P \bigl (-S_{i_1}', -S_{i_1 - 1}', \dots, 0 \in H_\pm \bigr) 
\P \bigl( S_{i_1 + 1} - S_{i_1}, \dots, S_{i_d} - S_{i_1}\in H_\pm \bigr) \P \bigl( S_1', \dots, S_{n - i_d}'  \in H_\pm \bigr)  \\
= \P \bigl (S_1', \dots, S_{i_1}' \in H_\mp \bigr) \P \bigl( S_1', \dots, S_{n - i_d}'  \in H_\pm \bigr) \\
\times \P \bigl( S_1, \dots, S_{i_d - i_1} \in H_\pm(0, S_{i_2 - i_1}, \dots, S_{i_d - i_1}) \bigr), 
\end{multline}
where $S_n'$ is an independent copy of the random walk $S_n$. Let us stress that we obtained~\eqref{eq:  three parts} assuming that $S_n$ is a random walk satisfying~\eqref{cond H} but not~\eqref{cond Sym}. Then \eqref{pinned} holds by \eqref{pm}, Lemma~\ref{Lemma prob}, and the following simple result.

\begin{lem}\label{Lemma half}
Let $S_n$ be a random walk in $\R^d$, and let $H$ be a half-space such that $0\in\partial H$. Assume that \eqref{cond H} and \eqref{cond Sym} hold. Then
$$
\P(S_1,\dots, S_n \in H)=\frac{(2n-1)!!}{(2n)!!}.
$$
\end{lem}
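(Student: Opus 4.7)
The plan is to reduce this to the one-dimensional Sparre Andersen identity \eqref{!!} by projecting onto the inward unit normal of the half-space. Let $u \in \S^{d-1}$ be the inward unit normal to $\partial H$, so that $H = \{z \in \R^d : \langle z, u \rangle \geq 0\}$ (using $0 \in \partial H$). Set $Y_k := \langle X_k, u \rangle$ and $\tilde S_k := Y_1 + \dots + Y_k = \langle S_k, u \rangle$. Then the event $\{S_1, \dots, S_n \in H\}$ equals $\{\tilde S_1 \geq 0, \dots, \tilde S_n \geq 0\}$, so $\tilde S_k$ is a one-dimensional random walk whose distributional properties we can read off from \eqref{cond H} and \eqref{cond Sym}.

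Next I would verify that $Y_1$ has a continuous symmetric distribution. Symmetry is immediate: since $X_1 \overset{d}{=} -X_1$, we have $Y_1 = \langle X_1, u \rangle \overset{d}{=} \langle -X_1, u \rangle = -Y_1$. Continuity of $Y_1$ follows from \eqref{cond H}, because $\{X_1 : \langle X_1, u \rangle = t\}$ is an affine hyperplane in $\R^d$ for every $t \in \R$, and so has probability zero. Therefore $\tilde S_k$ is a one-dimensional random walk with continuous symmetric increments, and \eqref{!!} gives
\[
\P(\tilde S_1 > 0, \dots, \tilde S_n > 0) = \frac{(2n-1)!!}{(2n)!!}.
\]

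Finally I would upgrade strict inequality to non-strict. For each $k \geq 1$, conditioning on $Y_1, \dots, Y_{k-1}$ and using that $Y_k$ has a continuous distribution (independent of $Y_1, \dots, Y_{k-1}$) gives $\P(\tilde S_k = 0) = \E[\P(Y_k = -\tilde S_{k-1} \mid Y_1, \dots, Y_{k-1})] = 0$. A union bound over $k=1, \dots, n$ shows that the events $\{\tilde S_k > 0 \text{ for all } k\}$ and $\{\tilde S_k \geq 0 \text{ for all } k\}$ differ by a null set, so the two probabilities coincide, and the claim follows.

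There is essentially no obstacle here: the only small subtlety is passing from strict to non-strict positivity, which is handled by the one-dimensional continuity coming out of \eqref{cond H}. Everything else is a direct projection of the problem onto the normal direction and an appeal to \eqref{!!}.
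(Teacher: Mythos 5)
Your proof is correct and follows essentially the same route as the paper: project onto the inward unit normal, observe that \eqref{cond H} and \eqref{cond Sym} give a continuous symmetric one-dimensional walk, and apply \eqref{!!}. The only difference is that you spell out the passage from strict to non-strict positivity, which the paper's one-line proof leaves implicit.
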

\begin{proof}
Denote by $u=u_H$ the unit vector that is orthogonal to $\partial H$ and belongs to $H$. The distribution of increments of the one-dimensional random walk $S_k^{(u)}:=\langle S_k, u\rangle, k \ge 1,$ is continuous and symmetric, hence the result follows by \eqref{!!} and
$$
\P(S_1,\dots, S_n\in H)=\P(S_1^{(u)} > 0,\dots, S_n^{(u)}>0).
$$
\end{proof}

\underline{Proof of \eqref{temporal}.} If $i_1 \neq 0$, we transform the trajectory by interchanging its part from $1$ to $i_1$ with the part from $i_1+1$ to $i_d$; this does not change the part from $i_d+1$ to $n$. See Figure~\ref{fig}, where the parts are denoted by ${\bf T_1}$, ${\bf T_2}$, and ${\bf T_3}$, respectively. The key observation is that for the transformed trajectory, $S_{i_d}$ becomes the most distant point from $H_\pm = H_\pm(0, S_{i_2} - S_{i_1}, \dots, S_{i_d} - S_{i_1})$. Let us prove this formally.

\begin{figure}
\begin{center}
\begin{minipage}{.5\textwidth}
\setlength{\parindent}{-5ex}
  \includegraphics{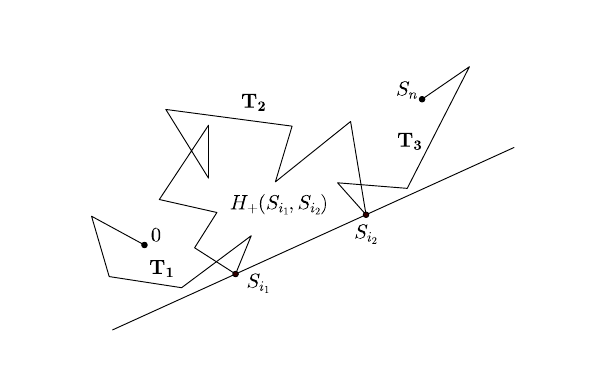}
\par  
\end{minipage}%
\begin{minipage}{.5\textwidth}
\setlength{\parindent}{-5ex}
  \includegraphics{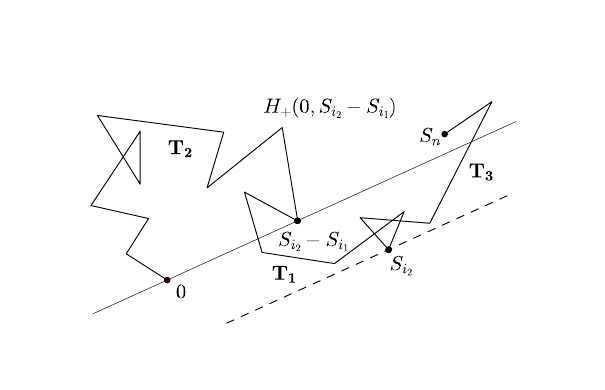}
\par
\end{minipage}
\vskip -1em
\caption{The path transform for $d=2$.}
\label{fig}
\end{center}
\end{figure}

If $i_1 \neq 0$, we rewrite the event $\{0, S_1, \dots, S_n \in H_\pm(S_{i_1}, \dots, S_{i_d}) \}$ in terms of the partial sums $S_k(\sigma), 1 \le k \le n,$ with
$$\sigma=(i_1+1, \dots, i_d, 1, \dots, i_1, i_d+1, \dots, n).$$
For the parts ${\bf T_1}$ and ${\bf T_3}$ of the trajectory, we use \eqref{eq: hyperplanes} to obtain
\begin{multline*}
\bigl \{ 0, S_1, \dots, S_{i_1}, S_{i_d +1}, \dots, S_n \in H_\pm(S_{i_1}, \dots, S_{i_d}) \bigr \}\\
=\bigl \{ 0, S_1, \dots, S_{i_1} \in H_\pm + S_{i_1} \, ; \, 0, S_{i_d +1}, \dots, S_n \in H_\pm + S_{i_d} \bigr \} \\
= \bigl \{ S_{i_d} - S_{i_1}, S_{i_d} - S_{i_1} + S_1, \dots, S_{i_d}, 0, S_{i_d +1}, \dots, S_n  \in H_\pm + S_{i_d} \bigr \} \\
= \bigl \{ 0, S_{i_d - i_1 +1}(\sigma), \dots, S_n(\sigma) \in H_\pm(0, S_{i_2 - i_1}(\sigma), \dots, S_{i_d - i_1}(\sigma)) + S_{i_d}(\sigma) \bigr \}.
\end{multline*}
Note that the event in the second line differs from the corresponding part of \eqref{three parts} since we added $0$ to second group of variables. For the part ${\bf T_2}$, 
\begin{multline*}
\bigl \{ S_{i_1 + 1}, \dots, S_{i_d} \in H_\pm(S_{i_1}, \dots, S_{i_d})  \bigr \} \\
=  \bigl \{ S_1 (\sigma), \dots, S_{i_d - i_1}(\sigma) \in H_\pm(0, S_{i_2 - i_1}(\sigma), \dots, S_{i_d - i_1}(\sigma)) \bigr \}.
\end{multline*}

Combining the above gives
\begin{align}
\P \bigl (0, S_1, \dots, S_n \in H_\pm(S_{i_1}, \dots, S_{i_d}) \bigr ) 
= \P &\Bigl ( S_1, \dots, S_{i_d - i_1} \in H_\pm(0, S_{i_2 - i_1}, \dots, S_{i_d - i_1}) ;  \label{eq: most distant}\\ 
& 0, S_{i_d - i_1 +1}, \dots, S_n \in H_\pm(0, S_{i_2 - i_1}, \dots, S_{i_d - i_1}) + S_{i_d} \Bigr ). \notag
\end{align}
By projecting on the orthogonal compliment, we see that $S_{i_d}$ is a most distant point from $H_\pm(\dots)$ among $0, S_{i_d - i_1 +1}, \dots, S_n$. Such a point is a.s. unique by assumption \eqref{cond Gen}. Note that \eqref{eq: most distant} is also valid for $i_1=0$ since in this case $H_\pm + S_{i_d} = H_\pm$. Therefore, \eqref{eq: most distant} can be written for all $0 \le i_1 <i_2$ as 
\begin{align}
\P(0, S_1, \dots, S_n \in H_\pm(S_{i_1}, \dots, S_{i_d}) ) &= \P \Bigr(S_1, \dots, S_{i_d - i_1} \in H_\pm(0, S_{i_2-i_1}, \dots, S_{i_d-i_1}); \label{eq: argmin} \\
& \argmin \limits_{0 \le k \le n -(i_d-i_1)} \det[ S_{i_2-i_1}, \dots, S_{i_d-i_1} , S_{i_d-i_1+ k} ] = \{i_1\} \Bigr), \notag
\end{align}
For a fixed temporal structure, i.e., the tuple $(i_2-i_1, \dots, i_d-i_1)$, it remains to sum in \eqref{eq: argmin} over $i_1$ from $0$ to $n-(i_d-i_1)$. The argmin disappears, and \eqref{temporal}  follows by \eqref{pm} and Lemma~\ref{Lemma prob}.
Proposition~\ref{face_prob} is now proved.

{\bf Remark on Footnote~\ref{note}.} Note that under assumptions made, \eqref{!!} does hold true in the one-dimensional case, see Sparre Andersen~\cite[Theorem 1]{Sparre0} or~\cite[Theorem 4]{Sparre2}; strictly speaking, both theorems are stated under slightly stronger assumptions which actually can be weakened to fit our requirements. The latter theorem yields \eqref{pinned} when applied for the partial sums of $(n-i_d+i_1)$-exchangeable one-dimensional increments $X_k':=\pm \det[S_{i_2-i_1}, \dots, S_{i_d-i_1}, X_{i_d-i_1+k}]$ (where $1 \le k \le n-i_d+i_1$) with $n-i_d+i_1$ substituted for $n$ and the first event in the r.h.s. of \eqref{eq: argmin}  substituted for the $C_n$ of~\cite[Theorem 4]{Sparre2}.

{\bf Proof of Proposition~\ref{face_prob asympt}.} The main ingredient is the following asymptotic version of Lemma~\ref{Lemma half}. Recall that for any half-space $H$ of $\R^d$, by $u_H$ we denote the unit vector that is orthogonal to $\partial H$ and belongs to $H$, and conversely, for any non-zero $u \in \R^d$, we put $H(u) = \{z \in \R^d: \langle z, u \rangle \ge 0 \}$.

\begin{lem}
\label{lem: half asympt}
Let $S_k$ be a random walk in $\R^d, d \ge 1,$ with increments that have zero mean, a finite covariance matrix $\Sigma$, and satisfy  \eqref{cond H}. Then
\begin{equation}
\label{eq: exit halfspace}
\lim_{n \to \infty} \sqrt{n} \P(S_1,\dots, S_n \in H) = \sqrt{\frac{2}{\pi} } R(u_H)
\end{equation}
uniformly over all half-spaces $H$ of $\R^d$ such that $0 \in \partial H$. The limit function $R(u)$ is continuous and positive on $\S^{d-1}$.
\end{lem}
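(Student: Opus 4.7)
The plan is to project the walk onto the inner unit normal of $H$, reducing the statement to a \emph{uniform} version of Rogozin's one-dimensional persistence asymptotic, and then to derive the latter by combining Spitzer's identity with the uniform Tauberian theorem of the Appendix.

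\textbf{Step 1 (Reduction to one dimension).} Parameterize half-spaces $H$ with $0 \in \partial H$ by their inner unit normals $u = u_H \in \Sd$, and set $Y_k(u) := \langle S_k, u\rangle$. Assumption~\eqref{cond H} gives $\P(Y_k(u)=0)=0$, so $\P(S_1,\dots,S_n \in H) = \P(Y_1(u) > 0, \dots, Y_n(u) > 0)$, the stopping time $T(u)$ from~\eqref{eq: R} equals $\inf\{k \ge 1 : Y_k(u) < 0\}$, and $R(u) = \E[-Y_{T(u)}(u)]/\sigma(u)$, where $\sigma^2(u) := \langle \Sigma u, u\rangle$. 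By compactness of $\Sd$ and positive-definiteness of $\Sigma$, the function $\sigma(u)$ is bounded above and bounded away from zero uniformly in $u \in \Sd$.

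\textbf{Step 2 (Spitzer's identity and uniform Tauberian theorem).} Use the Spitzer--Baxter identity
$$\sum_{n=0}^\infty s^n \P(Y_1(u) > 0,\dots,Y_n(u) > 0) \;=\; \exp\!\left(\sum_{n=1}^\infty \frac{s^n}{n}\, \P(Y_n(u) > 0)\right), \qquad 0 \le s < 1,$$
and decompose $\P(Y_n(u) > 0) = \tfrac12 + r_n(u)$ to extract the factor $(1-s)^{-1/2}$, rewriting the generating function as $(1-s)^{-1/2}\exp(a(u,s))$ with $a(u,s) := \sum_{n \ge 1} s^n r_n(u)/n$. For each fixed $u$, Spitzer's classical theorem for mean-zero finite-variance walks yields the Abel limit $a(u) := \lim_{s \to 1^-} a(u,s) \in \R$, while the one-dimensional Rogozin--Sparre Andersen asymptotic recalled in the text identifies $e^{a(u)} = \sqrt{2}\,R(u)$. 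To lift pointwise to uniform convergence the plan is to establish
$$\sup_{u \in \Sd}\, \sum_{n \ge N} \frac{|r_n(u)|}{n} \xrightarrow[N \to \infty]{} 0,$$
so that $a(u,s) \to a(u)$ uniformly in $u$ as $s \to 1^-$, and then to invoke the uniform Tauberian theorem (Theorem~\ref{thm: Uniform Tauberian}), whose monotonicity hypothesis is automatic because $n \mapsto \P(Y_1(u) > 0,\dots,Y_n(u) > 0)$ is non-increasing. This gives $\sqrt{n}\,\P(S_1,\dots,S_n \in H) \to \sqrt{2/\pi}\,R(u_H)$ uniformly as required.

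\textbf{Step 3 (Properties of $R$; main obstacle).} Positivity of $R(u)$ follows because $\P(Y_1(u) < 0) > 0$ for every $u \in \Sd$ (by the CLT and the uniform lower bound on $\sigma(u)$), so $\E[-Y_{T(u)}(u)] > 0$; finiteness of this expectation is a consequence of the Wiener--Hopf factorization for square-integrable walks. Continuity of $R$ on $\Sd$ can be read off a posteriori from the uniform limit itself: the prelimit $\sqrt{n}\,\P(S_1,\dots,S_n \in H)$ is continuous in $u_H$ by~\eqref{cond H} and dominated convergence, and a uniform limit of continuous functions on the compact set $\Sd$ is continuous. The main obstacle is the uniform control of $r_n(u)$ in Step 2 with only a finite second moment at hand, since Berry--Esseen estimates are unavailable. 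This will require a Lindeberg-type uniform CLT, exploiting the uniform square-integrability of $\{\langle X_1, u\rangle : u \in \Sd\}$ together with the uniform positivity of $\sigma(u)$, in combination with the uniform Tauberian theorem of the Appendix, which the authors flag as a result of independent interest.
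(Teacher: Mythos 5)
Your architecture coincides with the paper's: project onto the inner normal $u_H$, use Spitzer's identity to write the generating function of the persistence probabilities as $(1-s)^{-1/2}\exp(a(u,s))$, identify the Abel limit of $a(u,s)$ with $\log(\sqrt{2}\,R(u))$, and finish with the uniform Tauberian theorem of the Appendix (your monotonicity input, $n\mapsto \P(T(u)>n)$ non-increasing, is exactly what the paper uses). Your a posteriori continuity argument for $R$ is a legitimate alternative to the paper's, which instead gets continuity of $R$ directly as a uniform limit of the continuous partial sums of the Spitzer series.

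The genuine gap is the step you yourself flag as the ``main obstacle'': establishing $\sup_{u\in\S^{d-1}}\sum_{n\ge N} n^{-1}|r_n(u)|\to 0$. A Lindeberg-type uniform CLT cannot deliver this. Under a bare second-moment assumption the CLT gives $\sup_u |r_n(u)|\to 0$ with no rate, and $\sum_{n\ge N} n^{-1}\cdot o(1)$ need not be small; indeed even the \emph{pointwise} absolute convergence of $\sum_n n^{-1}|\P(S_n^{(u)}>0)-\tfrac12|$ is a strictly finer fact (Ros\'en/Spitzer) than the CLT and does not follow from it. The paper closes this gap with Lemma~\ref{lem: Uniform Spitzer}, which imports Nagaev's (2009) \emph{quantitative} bound on the tail $\sum_{n\ge k} n^{-1}|\P(S_n\ge 0)-\tfrac12|$ in terms of truncated second and third moments of the normalized increment; uniform square integrability of the family $\langle X_1,u\rangle/\sqrt{\langle \Sigma u,u\rangle}$, which follows from the uniform lower bound on $\langle\Sigma u,u\rangle$ that you already noted, then makes those truncated moments vanish uniformly in $u$. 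Without a remainder estimate of this type (or some equivalent quantitative substitute for Berry--Esseen) your Step 2 does not close, and the rest of the argument, which is otherwise sound, has nothing to feed into the uniform Tauberian theorem.
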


We will see that the pointwise convergence in~\eqref{eq: exit halfspace} holds by a simple reduction to the well-known one-dimensional result of fluctuation theory. The difficulty is in showing that the convergence is uniform. Since this is quite a technical statement and the main message of our paper is in combinatorial methods, we postpone the proof of Lemma~\ref{lem: half asympt} until the Appendix.

Let us conclude the proof of Proposition~\ref{face_prob asympt}. We first recall that the cross product of $x_1, \dots, x_{d-1} \in \R^d$ is given by $x_1 \times \ldots \times x_{d-1} = \sum_{k=1}^d \det[x_1, \dots, x_{d-1}, e_k] e_k,$ where $e_1, \dots, e_d$ is the standard basis of $\R^d$.  Now consider~\eqref{eq:  three parts} with $i_1=0$. The first probability in the last equation in~\eqref{eq:  three parts} does not appear. Conditioning on $S_{i_2}, S_{i_3}, \dots, S_{i_d}$, which determine 
$$u_{\pm} := \pm  S_{i_2 } \times \ldots \times S_{i_d} = \pm (S_{i_2} - S_{i_1}) \times \ldots \times (S_{i_d} - S_{i_{d-1}})$$ 
and thus fix $H(u_\pm) = H_\pm(0, S_{i_2}, \dots, S_{i_d})$, and using Lemma~\ref{Lemma prob} for the third probability in~\eqref{eq:  three parts}, we get
$$
\frac{\P \bigl (0, S_1, \dots, S_n \in H_\pm(0, S_{i_2}, \dots, S_{i_d}) \bigr )}{ i_2 (i_3-i_2) \cdot \ldots \cdot (i_d - i_{d-1}) } = \E \bigl [\P\bigl( S_1', \dots, S_{n - i_d}'  \in H(u_\pm)  \bigl| \bigr. u_\pm \bigr) \bigr]. 
$$

The r.h.s. is $O(\frac{1}{\sqrt{n-i_d+1}})$ by Lemma~\ref{lem: half asympt}, implying the required uniform boundedness of the $o(1)$ term in Proposition~\ref{face_prob asympt}. Applying Lemma~\ref{lem: half asympt} one more time gives that
\begin{equation}
\label{eq: uniform discrete 1}
\frac{\P \bigl (0, S_1, \dots, S_n \in H_\pm(0, S_{i_2}, \dots, S_{i_d}) \bigr ) }{ i_2 (i_3-i_2) \cdot \ldots \cdot (i_d - i_{d-1})} =  \Bigl(\sqrt{\frac{2}{\pi}}  + o(1) \Bigr)\frac{\E R \bigl(\pm S_{i_2 } \times \ldots \times S_{i_d - i_{d-1}} \bigr)}{\sqrt{n-i_d}} 
\end{equation}
uniformly in $1 \le i_2 < i_3 < \ldots < i_d \le n- h_n$, since by its definition, $R(u)$ is an angular function and $u_\pm$ is a.s. non-zero by assumption~\eqref{cond H}. Then 
$$\E R \bigl(\pm S_{i_2 } \times \ldots \times S_{i_d - i_{d-1}} \bigr) = \E R \biggl ( \pm  \frac{S_{i_2}^{(1)}}{\sqrt{i_2}} \times \frac{S_{i_3-i_2}^{(2)}}{\sqrt{i_3 - i_2}} \times \ldots \times \frac{S_{i_d - i_{d-1}}^{(d-1)}}{\sqrt{i_d-i_{d-1}}}  \biggr),$$ where, recall, that $S_k^{(1)}, \ldots, S_k^{(d-1)}$ are independent copies of the random walk $S_k$.

Let $N_1, \ldots, N_{d-1}$ be independent standard Gaussian random vectors in $\R^d$. Since $R(u)$ is a continuous bounded function on $\R^d \setminus \{0\}$ and the cross product $x_1 \times \ldots \times x_{d-1}$ is continuous on $\R^{d \times (d-1)}$, the central limit theorem combined with the continuous mapping theorem and the fact $N_1 \stackrel{d}{=} -N_1$ imply that
\begin{equation}
\label{eq: uniform discrete 2}
\lim_{n \to \infty} \E R \bigl(\pm S_{i_2 } \times \ldots \times S_{i_d - i_{d-1}} \bigr) = \E R \bigl( \Sigma^{1/2} N_1 \times \ldots \times \Sigma^{1/2} N_{d-1} \bigr) 
\end{equation} 
uniformly in $1 \le i_2 < i_3 < \ldots < i_d \le n$ such that $\min(i_2, i_3 - i_2, \ldots, i_d - i_{d-1}) \ge h_n$. 

Finally, for any non-degenerate $d \times d$ matrix $A$ one has
\begin{multline*}
A N_1 \times \ldots \times A N_{d-1} = \sum_{k=1}^d \det[A N_1, \dots, A N_{d-1}, e_k] e_k \\
= \sum_{k=1}^d \det(A \cdot [N_1, \dots, N_{d-1}, A^{-1} e_k]) e_k 
= \det A \cdot (A^{-1})^T (N_1 \times \ldots \times N_{d-1}).
\end{multline*}
In particular, this shows that the distribution of $N_1 \times \ldots \times N_{d-1}$ is invariant under orthogonal transformations since the standard Gaussian distribution is so. Hence the angular component of this distribution is uniform on $\S^{d-1}$ by the uniqueness of Haar measure (on the special orthogonal group $SO(n)$). Then, since the covariance matrix $\Sigma$ is symmetric and $R$ is an angular function, we have
$$R \bigl(\Sigma^{1/2} N_1 \times \ldots \times \Sigma^{1/2} N_{d-1} \bigr) = R \bigl(\Sigma^{-1/2}  (N_1 \times \ldots \times  N_{d-1}) \bigr) \stackrel{d}{=} R(\Sigma^{-1/2} U).$$ Combining this fact with \eqref{pm}, \eqref{eq: uniform discrete 1}, and \eqref{eq: uniform discrete 2} yields the main assertion of Proposition~\ref{face_prob asympt}.


{\bf Proof of Theorem~\ref{second}.} A straightforward extension of the path-transform argument in the proof of \eqref{temporal} in
Proposition~\ref{face_prob} gives a little strengthening of \eqref{eq: argmin}: for any non-negative Borel function $g: \R^{d \times(d-1)} \to \R$,
\begin{align*}
& \E \Bigl[ g(S_{i_2} - S_{i_1}, \dots, S_{i_d} - S_{i_{d-1}}) \I (0, S_1, \dots, S_n \in H_\pm(S_{i_1}, \dots, S_{i_d}) ) \Bigr] \\
&=\E \Bigl[ g(S_{i_2-i_1}, S_{i_3-i_1} - S_{i_2-i_1}, \dots, S_{i_d -i_1} - S_{i_{d-1} -i_1}) \I \bigl(S_1, \dots, S_{i_d - i_1} \in H_\pm(0, S_{i_2-i_1}, \dots, S_{i_d-i_1} \bigr) \Bigr. \\
& \qquad \times \Bigl. \I \bigl( \argmin \limits_{0 \le k \le n -(i_d-i_1)} \det[ S_{i_2-i_1}, \dots, S_{i_d-i_1} , S_{i_d-i_1+ k} ] = \{i_1\} \Bigr) \Bigr].\\
\end{align*}
For any fixed tuple $(i_2-i_1, \dots, i_d-i_1)=:(i_1', \dots,i_{d-1}')$, we sum over $i_1=:i$ from $0$ to $n-i'_{d-1}$ to obtain a conditional version of \eqref{temporal}:
\begin{align} \label{temporal cond}
\sum_{i=0}^{n-i_{d-1}'} \E \bigl(\conv(S_i, S_{i+i_1'},\dots,S_{i+ i_{d-1}'} ) \in \mathcal F_n &\bigl| \bigr. S_{i+i_1'} - S_{i},S_{i+i_2'} - S_{i+i_1'},\dots,S_{i+ i_{d-1}'} - S_{i+ i_{d-2}'} \bigr) \notag\\
&= \frac{2}{i_1'(i_2'-i_1') \cdot ... \cdot (i_{d-1}'-i_{d-2}')} \text{ a.s}.
\end{align}
Theorem~\ref{second} then follows immediately by summation over all temporal structures $(i_1', \dots,i_{d-1}')$.

{\bf Computation of the asymptotics.} 1. We claim that for any sequence $a_n$ such that $a_n \sim (\log n)^a n^{-1/2}$ for some $a \ge 0$, it holds that
\be \label{sv asympt}
\sum_{k=1}^n \frac{a_{n-k}}{k} \sim \frac{(\log n)^{a+1}}{\sqrt{n}}.
\ee
In particular, by \eqref{gamma asympt} this implies \eqref{log asympt} if we take $a=0$.

Let us check that the main contribution to the asymptotics in~\eqref{sv asympt} comes from the terms $k=o(n)$. Since the sum of $a_n$ diverges, for any $\varepsilon \in (0,1)$,
$$\sum_{k=\varepsilon n }^n \frac{a_{n-k}}{k}  \le \frac{1}{\varepsilon n} \sum_{k=\varepsilon n }^n a_{n-k} \sim  \frac{1}{\varepsilon n} \sum_{k=1}^{(1-\varepsilon) n } \frac{(\log k)^a}{\sqrt{k}} \le \frac{2 (\log n )^a}{\varepsilon \sqrt{n}}.$$ The last expression is of a smaller order of asymptotics than
$$\sum_{k=1}^{\varepsilon n - 1} \frac{a_{n-k}}{k} \sim \sum_{k=1}^{\varepsilon n-1} \frac{(\log (n-k))^a}{k \sqrt{(n-k)}},$$ since
$$\frac{(\log n)^{a+1}}{\sqrt{n}} \sim  \frac{(\log (1-\varepsilon) n )^a}{\sqrt{n}} \sum_{k=1}^{\varepsilon n-1} \frac{1}{k}  \le \sum_{k=1}^{\varepsilon n-1} \frac{(\log (n-k))^a}{k \sqrt{(n-k)}} \le \frac{(\log n)^a}{\sqrt{(1-\varepsilon) n}} \sum_{k=1}^{\varepsilon n-1} \frac{1}{k} \sim  \frac{(\log n)^{a+1}}{\sqrt{(1-\varepsilon) n}}.$$ These inequalities clearly imply \eqref{sv asympt}.

2. We prove \eqref{E RW 0} by induction in $d$. The base $d=2$ holds by \eqref{log asympt}, which we proved above. Since
\beaa
&& \sum_{1 \leq i_2 <\dots<i_d\leq n}\frac{(2n-2i_d-1)!!}{i_2 \cdot (2n-2i_d)!!}\prod_{k=2}^{d-1}\frac{1}{i_{k+1}-i_k} \\
&=& \sum_{i_2 = 1}^{n-d+2} \frac{1}{i_2} \left[\sum_{1 \leq i_2' <\dots<i_{d-1}' \leq n - i_2} \frac{(2(n-i_2)-2i_{d-1}'-1)!!}{i_2' \cdot (2(n-i_2)-2i_{d-1}')!!}\prod_{k=2}^{d-2}\frac{1}{i_{k+1}'-i_k'} \right ],
\eeaa
\eqref{1225} (or \eqref{pinned}) implies that
\be \label{induction}
\E_{RW}^{(d)} |\mathcal{F}_n'|= \sum_{k=1}^n \frac1k \E_{RW}^{(d-1)} |\mathcal{F}_{n-k}'|,
\ee
where the upper indices show dimension and by definition, $\E_{RW}^{(d)} |\mathcal{F}_n'|:=0$ for $n \le d-1$. It remains to use \eqref{sv asympt} to obtain \eqref{E RW 0}.

3. Arguing as above and using \eqref{pinned bridge} instead of \eqref{pinned}, one can easily show that \eqref{induction} also holds for a random walk bridge of length $n+1$. For any sequence $b_n$ such that $b_n \sim (\log n)^b n^{-1}$ for some $b \ge 0$, one has 
\be \label{sv asympt 2}
\sum_{k=1}^n \frac{b_{n-k}}{k} \sim \frac{(b+2)(\log n)^{b+1}}{(b+1) n }.
\ee
The difference with \eqref{sv asympt} is due to the fact that the main contribution to the asymptotics comes from the indices $k$ that are either $k=o(n)$ or $k=n-o(n)$. The asymptotics for the base $d=2$ is then different, namely
$$\E_{Br}^{(2)} |\mathcal{F}_n'| = \sum_{k=1}^n \frac{2}{k(n-k+1)} \sim \frac{4 \log n}{n},$$
but the rest is analogous and \eqref{E BR 0} follows easily.

4. The assertion \eqref{E RW 0 assymetric} immediately follows from \eqref{E RW 0} once we check that in both summations resulting in these asymptotics, for any slowly varying sequence $c_n$ tending to infinity, the contributions of the indices $1 \le i_2 < i_3 < \ldots < i_d \le n$ with $\min(i_2, i_3 -i_2, \ldots, i_d - i_{d-1}, n-i_d) \le c_n$ are of a smaller order of asymptotics. 

We already saw that the main contribution to the asymptotics of the sum in \eqref{sv asympt} comes from the indices $k=o(n)$. Consequently, the indices with $n - i_d \le c_n$ do not contribute to the asymptotics in \eqref{E RW 0}  and \eqref{E RW 0 assymetric}. On the other hand,
$$\sum_{k=1}^{c_n} \frac{a_{n-k}}{k} \sim \sum_{k=1}^{c_n} \frac{(\log(n-k))^a}{k \sqrt{n-k}} \sim \frac{(\log n)^a}{\sqrt{n}} \sum_{k=1}^{c_n} \frac{1}{k} \sim \log c_n \frac{(\log n)^a}{\sqrt{n}} = o \Bigl( \frac{(\log n)^{a+1}}{\sqrt{n}} \Bigr),$$ hence the indices $k \le c_n$ do not contribute as well to the sum in \eqref{sv asympt}. Consequently, neither do  any of the indices satisfying $\min(i_2, i_3 -i_2, \ldots, i_d - i_{d-1}) \le c_n$.

\section*{Appendix}
\begin{proof}[\bf Proof of Lemma~\ref{lem: half asympt}.]
For any direction $u \in \S^{d-1}$, the one-dimensional random walk $S_k^{(u)}:=\langle S_k, u \rangle, k \ge 1,$ has increments $\langle X_k, u \rangle$ with zero mean and strictly positive variance $\langle \Sigma u, u \rangle$; recall that $\Sigma$ is non-degenerate as follows by assumption~\eqref{cond H}. The random variable $T(u)$, which is the exit time of the random walk $S_k$ from the half-space $H(u)$, coincides with the exit time of the walk $S_k^{(u)}$ from the non-negative half-line.
Then 
$$R(u)  = -\frac{\E \langle S_{T(u)}, u \rangle}{\sqrt{\langle \Sigma u, u \rangle }} = 
-\frac{\E  S_{T(u)}^{(u)}}{\sqrt{\Var(\langle X_1, u\rangle)}}.$$
The last expression admits (Feller~\cite[Section XVIII.5]{Feller}) representation in terms of the so-called Spitzer series:
\begin{equation}
\label{eq: Spitzer}
R(u)  =\frac{1}{\sqrt{2}}\exp \Bigl(\sum_{k=1}^\infty \frac{1}{n} \bigl[\P(S_n^{(u)} > 0) -1/2 \bigr] \Bigr).
\end{equation}
The series it known to converge under the zero mean and finite variance assumption on the increments so $R(u)$ is positive and finite on $\S^{d-1}$.

The convergence in~\eqref{eq: exit halfspace} holds pointwise (cf. \eqref{eq: Spitzer} and Feller~\cite[Section XII.8]{Feller}) for every fixed $H=H(u_H)$. We will show that the standard proof of this statement can be strengthened to obtain the required uniform version. Let us recall this proof. 

For a fixed direction $u \in \S^{d-1}$, we are interested in the asymptotics of the tail probabilities
$$
\P(S_1,\dots, S_n\in H(u))=\P(S_1^{(u)} > 0,\dots, S_n^{(u)}>0) = \P(T(u) > n).
$$
The moment-generating function of $T(u)$ is given by the Spitzer identity
$$1 - \E s^{T(u)} = \exp \Bigl( \sum_{n=1}^\infty \frac{s^n}{n} \P(S_n^{(u)} \ge 0)  \Bigr), \qquad  0 \le s <1,$$
which is valid for any random walk. Since the Spitzer series converges under the zero mean and finite variance assumption on the increments of $S_k^{(u)}$, we have
$$1 - \E s^{T(u)} = \sqrt{1-s}\exp \Bigl( \sum_{n=1}^\infty \frac{s^n}{n} \bigl[\P(S_n^{(u)} \ge 0) -1/2 \bigr] \Bigr).$$ 
Then 
$$\sum_{n=0}^\infty \P(T(u) > n) s^n = \frac{1}{\sqrt{1-s}}\exp \Bigl( \sum_{n=1}^\infty \frac{s^n}{n} \bigl[\P(S_n^{(u)} \ge 0) -1/2 \bigr] \Bigr),$$ which can be verified by summation by parts in the l.h.s. 

Since the Spitzer series converges, by Abel's theorem and \eqref{eq: Spitzer} we have
\begin{equation}
\label{eq: Abel}
\lim_{s \to 1-} \exp \Bigl( \sum_{n=1}^\infty \frac{s^n}{n} \bigl[\P(S_n^{(u)} \ge 0) -1/2 \bigr] \Bigr)= \sqrt{2} R(u).
\end{equation}
Hence
\begin{equation}
\label{eq: MGF equiv}
\sum_{n=0}^\infty \P(T(u) > n) s^n \sim \frac{\sqrt{2} R(u)}{\sqrt{1-s}}, \qquad s \to 1-,
\end{equation}
and the pointwise version of \eqref{eq: exit halfspace} follows by a Tauberian theorem for power series of sequences with monotone differences. In fact, $U(n):= \sum_{k=0}^n \P(T(u) > k)$ has monotone differences $U(n) - U(n+1) = \P(T(u) > n)$. 

Now we explain how to modify the above argument to obtain the uniform asymptotics. The key ingredient is that the Spitzer series converges absolutely\footnote{For our proof, it actually suffices to use uniform convergence rather than the uniform absolute convergence. Indeed, it can be shown using Abel's uniform convergence test that the convergence in~\eqref{eq: Abel} is uniform as required.} uniformly in $u \in \S^{d-1}$. This is true by Lemma~\ref{lem: Uniform Spitzer} below applied to the family of random variables $\frac{\langle X_1, u \rangle}{\sqrt{\Var(\langle X_1, u\rangle)}}, u \in \S^{d-1},$ which is uniformly square integrable by the inequality $\frac{\langle X_1, u \rangle^2}{\Var(\langle X_1, u\rangle)} \le \sigma_1^{-1}\|X_1\|^2_2$, where $\sigma_1$ denotes the smallest eigenvalue of $\Sigma$. 


Since the Spitzer series converges absolutely uniformly in $u$ and it dominates termwise the absolute values of the series in~\eqref{eq: Abel}, the convergence in~\eqref{eq: Abel} is uniform.
Then the equivalence in~\eqref{eq: MGF equiv} is also uniform in $u \in \S^{d-1}$, and by the second assertion of the uniform Tauberian Theorem~\ref{thm: Uniform Tauberian} below, this implies \eqref{eq: exit halfspace}.

Finally, note that each term of the Spitzer series, namely $n^{-1}\bigl[\P(S_n^{(u)} > 0) -1/2 \bigr]$, depends continuously on $u \in \S^{d-1}$. This is readily seen from the continuity of probability measures and the fact that the distribution of $S_n$ does not put mass on hyperplanes due to assumption~\eqref{cond H}. Then $R(u)$ is continuous on $\S^{d-1}$ as a uniform limit of continuous functions. 

\end{proof}

\subsection*{Uniform absolute convergence of the Spitzer series} We present a statement stronger than needed for the use in the current paper. 

\begin{lem}
\label{lem: Uniform Spitzer}
Let $\{Y_\alpha\}_{\alpha \in I},$ where $I$ is some index set, be random variables with zero mean and unit variance. Let  $S_n^{(\alpha)}, n \ge 1,$ be a random walk with increments distributed as $Y_\alpha, \alpha \in I$. If the family $\{ Y_\alpha \}_{\alpha \in I}$ is uniformly square integrable, then the series
$$\sum_{n=1}^\infty \frac{1}{n} \bigl|\P(S_n^{(\alpha)} \ge 0) -1/2 \bigr|$$
converges uniformly in $\alpha \in I$.
\end{lem}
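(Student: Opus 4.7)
My plan is to obtain a uniform quantitative bound on $|\P(S_n^{(\alpha)} \ge 0) - 1/2|$ by combining a truncation argument with the classical Berry--Esseen inequality, and then to verify that these bounds, weighted by $1/n$, form a uniformly convergent series in $\alpha$. At a truncation level $K > 0$ I decompose $Y_\alpha = \bar Y_\alpha^K + \bar Y_\alpha^{K,*}$, where $\bar Y_\alpha^K := Y_\alpha \I(|Y_\alpha| \le K) - \E[Y_\alpha \I(|Y_\alpha| \le K)]$ is the centered truncation (bounded by $2K$) and $\bar Y_\alpha^{K,*}$ is the centered remainder. Uniform square integrability gives $\sup_\alpha \Var(\bar Y_\alpha^{K,*}) \le \epsilon_K := \sup_\alpha \E[Y_\alpha^2 \I(|Y_\alpha| > K)] \to 0$ as $K \to \infty$, while $\Var(\bar Y_\alpha^K) \to 1$ uniformly in $\alpha$.

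Applying the classical Berry--Esseen inequality to the truncated walk $\tilde S_n^{(\alpha)} := \sum_{i=1}^n \bar Y_{\alpha,i}^K$ (whose increments are bounded by $2K$) yields, uniformly in $\alpha$, a CLT rate $O(K/\sqrt n)$. Combining with Chebyshev's inequality for the remainder $R_n^{(\alpha)} := S_n^{(\alpha)} - \tilde S_n^{(\alpha)}$ (of variance at most $n \epsilon_K$), for any $\delta > 0$ one obtains
$$\bigl|\P(S_n^{(\alpha)} \ge 0) - 1/2\bigr| \le C \Bigl(\frac{\delta + K}{\sqrt n} + \frac{n \epsilon_K}{\delta^2}\Bigr),$$
which optimizes at $\delta \asymp n^{1/2} \epsilon_K^{1/3}$ to give the uniform bound $|\P(S_n^{(\alpha)} \ge 0) - 1/2| \le C(\epsilon_K^{1/3} + K/\sqrt n)$.

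With $K = K(n) \to \infty$ chosen to grow as a small power of $n$, I would verify uniform convergence of the series $\sum_n (\epsilon_{K(n)}^{1/3} + K(n)/\sqrt n)/n$. The second contribution $\sum K(n)/(n \sqrt n)$ is elementary. For $\sum \epsilon_{K(n)}^{1/3}/n$, a dyadic block decomposition reduces matters to uniform convergence of $\sum_k \epsilon_{K(2^k)}^{1/3}$, which I would attempt to control from the uniform decay $\epsilon_K \to 0$.

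The main obstacle is this last verification: under the sole hypothesis of uniform square integrability, the modulus $\epsilon_K$ may tend to zero arbitrarily slowly, rendering the cube-root loss above too crude. Overcoming this should require a sharper input, such as a non-uniform Berry--Esseen bound of Bikelis type of the form
$$\bigl|\P(S_n^{(\alpha)} \ge 0) - 1/2\bigr| \le C \Bigl(\E[|Y_\alpha|^3 \I(|Y_\alpha| \le \sqrt n)]/\sqrt n + \E[Y_\alpha^2 \I(|Y_\alpha| > \sqrt n)]\Bigr),$$
whose $1/n$-weighted sum can be handled by Fubini in terms of $\E Y_\alpha^2 = 1$ and the uniform Lindeberg modulus $\sup_\alpha \E[Y_\alpha^2 \I(|Y_\alpha| > \sqrt n)]$, combined with a delicate coupling of the truncation level $K(n)$ to the pointwise rate of $\epsilon_K$ to preserve uniformity in $\alpha$.
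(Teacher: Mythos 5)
There is a genuine gap, and it sits exactly where you flagged the difficulty. Your first route (bounded truncation $+$ Berry--Esseen $+$ Chebyshev) gives only $|\P(S_n^{(\alpha)}\ge 0)-1/2|\le C(\epsilon_K^{1/3}+K/\sqrt n)$, and since uniform square integrability says nothing about the rate at which $\epsilon_K\to 0$, the series $\sum_n \epsilon_{K(n)}^{1/3}/n$ need not converge (take $\epsilon_{K(n)}\asymp(\log n)^{-3}$, giving $\sum 1/(n\log n)=\infty$); so that route cannot be closed. Your fallback, the Bikelis-type non-uniform bound at $x=0$, also fails: the $1/n$-weighted sum of its second term is, by Fubini,
$$\sum_{n=1}^\infty \frac{1}{n}\,\E\bigl[Y_\alpha^2\,\I\bigl(|Y_\alpha|>\sqrt n\bigr)\bigr]=\E\Bigl[Y_\alpha^2\sum_{1\le n<Y_\alpha^2}\frac1n\Bigr]\asymp \E\bigl[Y_\alpha^2\log^+|Y_\alpha|\bigr],$$
which is \emph{not} controlled by the unit variance and can be infinite for a single square-integrable $Y_\alpha$ (e.g.\ density $\propto |y|^{-3}(\log|y|)^{-2}$). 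In other words, no term-by-term absolute bound of CLT-rate type can prove even the non-uniform statement; the absolute convergence of the Spitzer series under finite variance (Ros\'en's theorem) genuinely requires more than a pointwise Berry--Esseen estimate.

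The paper sidesteps this by quoting Nagaev's explicit estimate for the \emph{whole tail} of the Spitzer series: for $k\ge n_0(\alpha)$,
$$\sum_{n=k}^\infty \frac{1}{n}\bigl|\P(S_n^{(\alpha)}\ge 0)-\tfrac12\bigr|\le \frac{19}{4\sqrt k}+\frac{3}{\sqrt2}\,\E\bigl[Y_\alpha^2\I_{\{Y_\alpha^2\ge k\}}\bigr]+\frac{2}{\sqrt k}\,\E\bigl[|Y_\alpha|^3\I_{\{|Y_\alpha|\le\sqrt k\}}\bigr]+4\,\E\bigl[|Y_\alpha|\,\I_{\{|Y_\alpha|>\sqrt k\}}\bigr],$$
whose right-hand side involves no logarithmic weight and is then shown to vanish as $k\to\infty$ uniformly in $\alpha$ using only uniform square integrability (the third term via the split $\frac{1}{\sqrt k}\E[|Y_\alpha|^3\I_{\{|Y_\alpha|\le\sqrt k\}}]\le\varepsilon+\E[Y_\alpha^2\I_{\{|Y_\alpha|\ge\varepsilon\sqrt k\}}]$, the fourth via Cauchy--Schwarz). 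If you want to repair your argument, you need an input of this kind --- a bound on the summed remainder that already encodes the cancellation in the Spitzer series --- rather than any bound on the individual terms.
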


This statement fully rests on the series remainder estimate by Nagaev~\cite{Nagaev2009}.

\begin{proof}
As in~\cite{Nagaev2009}, for any $\alpha \in I$ denote $n_1(\alpha) = \min(k \ge 1: \E Y_\alpha^2 \I_{\{ Y_\alpha^2 < k\}}> 3/4)$ and $n_0(\alpha) = \max(8,n_1(\alpha))$. Putting together Eq.'s (6), (9) and (10) from~\cite{Nagaev2009} that estimate the terms of the main bound Eq. (2) gives that for any $k \ge n_0(\alpha)$,
$$\sum_{n=k}^\infty \frac{1}{n} \bigl|\P(S_n^{(\alpha)} \ge 0) -1/2 \bigr| \le \frac{19}{4 \sqrt{k}} + \frac{3}{\sqrt2} \E Y_\alpha^2 \I_{\{ Y_\alpha^2 \ge k\}} + \frac{2}{\sqrt{k}}\E |Y_\alpha|^3 \I_{\{|Y_\alpha| \le \sqrt{k}\}} + 4 \E |Y_\alpha| \I_{\{|Y_\alpha| > \sqrt{k}\}}.$$
The only difference with Nagaev's estimates is that this inequality is obtained by summation in Eq. (2) over $n \ge k$ rather than $n \ge n_0(\alpha)$ as is~\cite{Nagaev2009}. We also introduced a minor correction to Eq. (9). 

Since $n_0:= \sup_\alpha n_0(\alpha)$ is finite by the uniform square-integrability, the remainder estimate applies to all $\alpha \in I$ if $k$ is large enough. The first term vanishes as $ k \to \infty$ and by the uniform square-integrability, so does the second one uniformly in $\alpha \in I$. For the fourth term, use the Cauchy--Bunyakovsky--Schwarz inequality. For the remaining third term, for any $\varepsilon >0$, we have
$$\frac{1}{\sqrt{k}}\E |Y_\alpha|^3 \I_{\{|Y_\alpha| \le \sqrt{k}\}} \le \varepsilon \E Y_\alpha^2 \I_{\{|Y_\alpha| \le \varepsilon \sqrt{k}\}} + \E Y_\alpha^2 \I_{\{\varepsilon \sqrt{k} \le |Y_\alpha| \le \sqrt{k}\}} \le  \varepsilon + \E Y_\alpha^2 \I_{\{\varepsilon \sqrt{k} \le |Y_\alpha| \}},$$ where the last term again vanishes uniformly.
\end{proof}

\subsection*{A uniform Tauberian theorem} Although Tauberian theory is a very well studied subject and there are many results on the remainder terms in asymptotics, to our surprise we did not find any reference on uniform convergence. The next result is presented in a greater generality than needed for the use in the current paper. 

\begin{theo}[Uniform Tauberian theorem]
\label{thm: Uniform Tauberian}
Let $\{U_\alpha\}_{\alpha \in I}$, where $I$ is some index set, be non-decreasing right-continuous functions on $\R$ with $U_\alpha(0-)=0$ for every $\alpha \in I$, and let $\{L_\alpha \}_{\alpha \in I}$ be slowly varying functions. Assume that for some $\rho \ge 0$ 
$$\hat{U}_\alpha(s):= \int_{0}^\infty e^{-s x} dU_\alpha(x) \sim  s^{-\rho} L_\alpha(1/s), \quad s \to 0+ \quad \text{uniformly in } \alpha \in I.$$
Then
$$U_\alpha(x) \sim \frac{x^{\rho} L_\alpha(x)}{\Gamma(1 + \rho)}, \quad x \to \infty \quad \text{uniformly in } \alpha \in I.$$ 
If in addition, $U_\alpha$ is absolutely continuous with a monotone density $u_\alpha$ and $L_\alpha(x) \equiv c_\alpha$ is a positive constant for every $\alpha \in I$, and $\rho >0$, 
then
\begin{equation}
\label{eq: uniform diff}
u_\alpha(x) \sim \frac{x^{\rho-1} L_\alpha(x)}{\Gamma(\rho)}, \quad x \to \infty \quad \text{uniformly in } \alpha \in I.
\end{equation}
\end{theo}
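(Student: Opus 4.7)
The plan is to adapt Karamata's classical proof of the Tauberian theorem for Laplace--Stieltjes transforms (see, e.g., Feller~\cite{Feller}, Sec.~XIII.5), checking that every step preserves uniformity in $\alpha$. Introduce the rescaled nonnegative measures
$$\mu_\alpha^{(s)}([0, y]) := \frac{s^\rho}{L_\alpha(1/s)}\, U_\alpha(y/s), \qquad y \ge 0, \ s > 0.$$
The hypothesis says exactly $\int_0^\infty e^{-y}\, d\mu_\alpha^{(s)}(y) \to 1$ uniformly in $\alpha$ as $s \to 0+$, and the first assertion is equivalent to $\mu_\alpha^{(s)}([0, 1]) \to 1/\Gamma(1+\rho)$ uniformly in $\alpha$, from which the claim follows upon setting $x = 1/s$.

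For each $\lambda > 0$,
$$\int_0^\infty e^{-\lambda y}\, d\mu_\alpha^{(s)}(y) = \frac{s^\rho \hat{U}_\alpha(\lambda s)}{L_\alpha(1/s)} = \lambda^{-\rho} \cdot \frac{(\lambda s)^\rho \hat{U}_\alpha(\lambda s)}{L_\alpha(1/(\lambda s))} \cdot \frac{L_\alpha(1/(\lambda s))}{L_\alpha(1/s)};$$
the middle factor tends to $1$ uniformly in $\alpha$ by hypothesis, and the last factor tends to $1$ by uniform slow variation of $\{L_\alpha\}$ (discussed below). Linearity extends uniform convergence from $f(y) = e^{-\lambda y}$ to every polynomial $P(t)$ in $t = e^{-y}$ vanishing at $t = 0$. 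Weierstrass approximation on $[0, 1]$ then yields uniform convergence of $\int_0^\infty g(e^{-y})\, d\mu_\alpha^{(s)}(y)$ for every continuous $g : [0, 1] \to \R$ with $g(0) = 0$. Finally, the standard Karamata sandwich: approximate the a.e.-continuous indicator $\ind_{[e^{-1}, 1]}(t)$ from above and below by such continuous $g_\pm$ with $\int_0^\infty (g_+(e^{-y}) - g_-(e^{-y}))\, y^{\rho - 1}/\Gamma(\rho)\, dy < \varepsilon$. Because $\mu_\alpha^{(s)}$ is nonnegative, this squeezes $\mu_\alpha^{(s)}([0, 1])$ to within $\varepsilon$ of $1/\Gamma(1+\rho)$ uniformly in $\alpha$ for all sufficiently small $s$, completing the first part.

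For the second part, I would invoke the monotone density theorem with uniformity tracked explicitly. Since $L_\alpha \equiv c_\alpha$, the first part gives $U_\alpha(x) = c_\alpha x^\rho/\Gamma(1+\rho) + \varepsilon_\alpha(x)$ with $\sup_{\alpha \in I} |\varepsilon_\alpha(x)|/x^\rho \to 0$. For any $\lambda > 1$ and non-increasing $u_\alpha$,
$$u_\alpha(\lambda x)(\lambda - 1) x \le U_\alpha(\lambda x) - U_\alpha(x) \le u_\alpha(x)(\lambda - 1) x,$$
while the uniform asymptotic of $U_\alpha$ yields $(U_\alpha(\lambda x) - U_\alpha(x))/x^\rho \to c_\alpha(\lambda^\rho - 1)/\Gamma(1+\rho)$ uniformly in $\alpha$. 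Dividing by $(\lambda - 1) x^\rho$ and letting $\lambda \to 1+$ traps $x^{1-\rho} u_\alpha(x)/c_\alpha$ between expressions that converge to $1/\Gamma(\rho)$ uniformly in $\alpha$; the non-decreasing case is symmetric. This establishes~\eqref{eq: uniform diff}.

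The main obstacle is the \emph{uniform slow variation} of $\{L_\alpha\}$, i.e., $L_\alpha(1/(\lambda s))/L_\alpha(1/s) \to 1$ uniformly in $\alpha$ and locally uniformly in $\lambda > 0$ as $s \to 0+$, which is required in the key computation for the first part but is not listed explicitly as a hypothesis. It should either be added to the assumptions or derived by choosing canonical representatives of $\{L_\alpha\}$; in the only application of this theorem in the paper (via Lemma~\ref{lem: half asympt}), $L_\alpha \equiv c_\alpha$ is constant and this subtlety is vacuous.
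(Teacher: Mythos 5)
Your proof is correct and follows essentially the same route as the paper's: the Karamata--Korevaar polynomial-approximation argument (uniform convergence for $y\mapsto e^{-ky}$, extension by linearity to polynomials in $e^{-y}$, then a sandwich of the indicator $\ind_{[e^{-1},1]}$) for the first assertion, and a uniformity-tracking monotone density argument for the second, which the paper only sketches by reference to Korevaar. The uniform-slow-variation caveat you flag is genuine but is equally implicit in the paper's own step $\hat U_\alpha(ks)\sim k^{-\rho}s^{-\rho}L_\alpha(1/s)$, and, as you note, it is vacuous in the sole application where $L_\alpha\equiv c_\alpha$.
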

\begin{rem*}
It is possible to show that~\eqref{eq: uniform diff} holds under the less strenuous (than $L_\alpha(x) \equiv c_\alpha$) assumption of uniform slow variation for $\{L_\alpha \}_{\alpha \in I}$:
$$\lim_{x \to \infty} \sup_{1 \le s \le 2} \Bigl | \frac{L_\alpha(sx)}{L_\alpha(x)} -1 \Bigr| =0 \quad \text{uniformly in } \alpha \in I.$$
\end{rem*}
Our proof fully follows the one of Korevaar's~\cite[Theorem I.15.3]{Korevaar}, which is based on explicit estimates of $U_\alpha(x)$ as opposed to more elegant standard proofs (as in Feller~\cite[Theorem XIII.5.2]{Feller}) relying on the continuity theorem for  Laplace transform. 
\begin{proof}
For any positive integer $m$, 
\begin{equation}
\label{eq: unif equiv}
\hat{U}_\alpha(ks) \sim  k^{-\rho} s^{-\rho} L_\alpha(1/s), \quad s \to 0+ \quad \text{uniformly in } \alpha \in I, k \in \{1, \ldots, m\}.
\end{equation} 
Then, since for any positive integer $k$, one has
$$\int_{0}^\infty e^{-kx} d(x^\rho) = k^{-\rho} \Gamma(1+ \rho),$$
we see from \eqref{eq: unif equiv} that for any polynomial $P(z)=\sum_{k=1}^m a_k z^k$,
\begin{equation}
\label{eq: polynomial equiv}
\int_{0}^\infty P(e^{-sx}) dU_\alpha(x) \sim  \frac{s^{-\rho} L_\alpha(1/s)}{\Gamma(1+\rho)} \int_0^\infty P(e^{-x}) d (x^\rho) , \quad s \to 0+ \quad \text{uniformly in } \alpha \in I.
\end{equation}

As in~\cite[Theorem I.15.3]{Korevaar}, denote $g(z):=\I_{[e^{-1},1]}(z)$ and for any $\varepsilon >0$, consider a polynomial $P_\varepsilon(z)$ approximating the indicator function $g(z)$ on $[0,1]$ such that 
$$P_\varepsilon(z) \ge g(z), z \in [0,1], \quad \text{and} \quad \int_0^1 (P_\varepsilon(z) - g(z))  \rho (-\log z)^{\rho -1} z^{-1} dz \le \varepsilon.$$ The latter condition ensures that
$$\int_0^\infty P_\varepsilon(e^{-x}) d(x^\rho) \le  \int_0^\infty g(e^{-x}) d(x^\rho)  + \varepsilon  = \int_0^1 d (x^\rho) + \varepsilon = 1 + \varepsilon.$$ Finally, since by the choice of $P_\varepsilon$,
$$\int_{0}^\infty P_\varepsilon(e^{-sx}) dU_\alpha(x) \ge \int_{0}^\infty g(e^{-sx}) dU_\alpha(x) = U_\alpha(1/s),$$ from \eqref{eq: polynomial equiv} we see that there exists an $s_\varepsilon>0$ such that
$$U_\alpha(1/s) \le (1 + \varepsilon) \frac{s^{-\rho} L_\alpha(1/s)}{\Gamma(1+\rho)},  \quad \alpha \in I, s \in (0, s_\varepsilon).$$ 

Similarly, we obtain an analogous lower bound. Both inequalities imply the first assertion of the theorem. 

The second assertion~\eqref{eq: uniform diff} that the uniformity is preserved under ``differentiation'' of the asymptotics can be checked by repeating the elementary proof of Lemma~17.1 in~\cite{Korevaar}. We omit the details. The assertion of the remark follows along the same lines.

\end{proof}

\section*{Acknowledgement}
We wish to thank Nick Bingham for discussions and advice. We are grateful to the anonymous referees for their comments and suggestions.

\bibliographystyle{plain}
\bibliography{halfplane_bib}

\begin{thebibliography}{10}

\bibitem{Abram}
J.~Abramson, J.~Pitman, N.~Ross, and G.~Uribe~Bravo.
\newblock Convex minorants of random walks and {L}\'evy processes.
\newblock {\em Electron. Commun. Probab.}, 16:423--–434, 2011.

\bibitem{AS}
F.~Aurzada and T.~Simon.
\newblock Persistence probabilities and exponents.
\newblock In {\em L\'evy matters {V}}, volume 2149 of {\em Lecture Notes in
  Math.}, pages 183--224. Springer, Cham, 2015.

\bibitem{Nielsen}
O.~Barndorff-Nielsen and G.~Baxter.
\newblock Combinatorial lemmas in higher dimensions.
\newblock {\em Trans. Amer. Math. Soc.}, 108:313--325, 1963.

\bibitem{Baxter}
G.~Baxter.
\newblock A combinatorial lemma for complex numbers.
\newblock {\em Ann. Math. Statist.}, 32:901--904, 1961.

\bibitem{BMS13}
A.J. Bray, S.N. Majumdar, and G.~Schehr.
\newblock Persistence and first-passage properties in non-equilibrium systems.
\newblock {\em Advances in Physics}, 62:225--361, 2013.

\bibitem{Eldan0}
R.~Eldan.
\newblock Extremal points of high-dimensional random walks and mixing times of
  a {B}rownian motion on the sphere.
\newblock {\em Ann. Inst. H. Poincar\'e Sec. B}, 50:95--110, 2014.

\bibitem{rE14}
R.~Eldan.
\newblock Volumetric properties of the convex hull of an n-dimensional
  {B}rownian motion.
\newblock {\em Electron. J. Probab}, 19:1--34, 2014.

\bibitem{Feller}
William Feller.
\newblock {\em An introduction to probability theory and its applications.
  {V}ol. {II}.}
\newblock Second edition. John Wiley \& Sons, Inc., New York-London-Sydney,
  1971.

\bibitem{GHS03}
F.~Gao, D.~Hug, and R.~Schneider.
\newblock Intrinsic volumes and polar sets in spherical space.
\newblock {\em Math. Notae}, 41:159--176, 2003.

\bibitem{GV01}
F.~Gao and R.~A. Vitale.
\newblock Intrinsic volumes of the {B}rownian motion body.
\newblock {\em Discrete Comput. Geom.}, 26(1):41--50, 2001.

\bibitem{GKZ15}
F.~G\"otze, Z.~Kabluchko, and D.~Zaporozhets.
\newblock Conic analogues of {S}udakov--{T}sirelson theorem.
\newblock {\em In preparation}, 2016.

\bibitem{KVZ15}
Z.~Kabluchko, V.~Vysotsky, and D.~Zaporozhets.
\newblock Convex hulls of random walks, hyperplane arrangements, and {W}eyl
  chambers.
\newblock {\em Preprint, arXiv:1510.04073}, 2015.

\bibitem{KZ12}
Z.~Kabluchko and D.~Zaporozhets.
\newblock Random determinants, mixed volumes of ellipsoids, and zeros of
  {G}aussian random fields.
\newblock {\em Zap. Nauchn. Sem. POMI}, 408:187--196, 2012.
\newblock English translation: {\it J. Math. Sci. (N. Y.)} 199:168--173, 2014.

\bibitem{KZ14}
Z.~Kabluchko and D.~Zaporozhets.
\newblock Intrinsic volumes of {S}obolev balls with applications to {B}rownian
  convex hulls.
\newblock {\em Trans. Amer. Math. Soc.}, 368(12):8873--8899, 2016.

\bibitem{kampf_etal}
J.~Kampf, G.~Last, and I.~Molchanov.
\newblock On the convex hull of symmetric stable processes.
\newblock {\em Proc. Amer. Math. Soc.}, 140(7):2527--2535, 2012.

\bibitem{Korevaar}
J.~Korevaar.
\newblock {\em Tauberian theory}.
\newblock Springer-Verlag, Berlin, 2004.

\bibitem{MCR10}
S.~Majumdar, A.~Comtet, and J.~Randon-Furling.
\newblock Random convex hulls and extreme value statistics.
\newblock {\em J. Stat. Phys.}, 138:955--1009, 2010.

\bibitem{MT14}
M.~McCoy and J.~Tropp.
\newblock From {S}teiner formulas for cones to concentration of intrinsic
  volumes.
\newblock {\em Discrete Comput. Geom.}, 51:926--963, 2014.

\bibitem{MolchanovWespi}
I.~Molchanov and F.~Wespi.
\newblock Convex hulls of l{\'e}vy processes.
\newblock {\em Electron. Commun. Probab.}, 21:no. 69, 11 pp., 2016.

\bibitem{Nagaev2009}
S.~V. Nagaev.
\newblock A new proof of the absolute convergence of the {S}pitzer series.
\newblock {\em Theory Probab. Appl.}, 54(1):151--154, 2010.

\bibitem{PUB}
J.~Pitman and G.~Uribe~Bravo.
\newblock The convex minorant of a {L}\'evy process.
\newblock {\em Ann. Probab.}, 40:1636--1674, 2012.

\bibitem{lS76}
L.~Santal\'o.
\newblock {\em Integral {G}eometry and {G}eometric {P}robability}, volume~1.
\newblock Addison-Wesley Publishing Company, Reading, 1976.

\bibitem{schneider_weil_book}
R.~Schneider and W.~Weil.
\newblock {\em Stochastic and {I}ntegral {G}eometry}.
\newblock Springer-Verlag, Berlin, 2008.

\bibitem{SS}
T.~L. Snyder and J.~M. Steele.
\newblock Convex hulls of random walks.
\newblock {\em Proc. Amer. Math. Soc.}, 117:1165--1173, 1993.

\bibitem{Sparre0}
E.~Sparre~Andersen.
\newblock On the number of positive sums of random variables.
\newblock {\em Skand. Aktuarietidskr.}, 32:27--36, 1949.

\bibitem{Sparre2}
E.~Sparre~Andersen.
\newblock On sums of symmetrically dependent random variables.
\newblock {\em Scand. Aktuarietidskr.}, 36:123--138, 1953.

\bibitem{Sparre}
E.~Sparre~Andersen.
\newblock On the fluctuations of sums of random variables {II}.
\newblock {\em Math. Scand.}, 2:195--223, 1954.

\bibitem{SW}
F.~Spitzer and H.~Widom.
\newblock The circumference of a convex polygon.
\newblock {\em Proc. Amer. Math. Soc.}, 12:506--509, 1961.

\bibitem{Steele2002}
J.~Michael Steele.
\newblock The {B}ohnenblust-{S}pitzer algorithm and its applications.
\newblock {\em J. Comput. Appl. Math.}, 142(1):235--249, 2002.
\newblock Probabilistic methods in combinatorics and combinatorial
  optimization.

\bibitem{vS76}
V.~Sudakov.
\newblock Geometric problems in the theory of infinite-dimensional probability
  distributions.
\newblock {\em Trudy Mat. Inst. Steklov}, 141(4):3--191, 1976.
\newblock English translation: AMS.

\bibitem{TY14}
K.~Tikhomirov and P.~Youssef.
\newblock When does a discrete-time random walk in {$\mathbb R^n$} absorb the
  origin into its convex hull?
\newblock {\em To appear in Ann. Probab.}
\newblock Preprint available at http://arxiv.org/abs/1410.0458.

\bibitem{tsirelson_1}
B.~S. Tsirelson.
\newblock A geometric approach to maximum likelihood estimation for an
  infinite-dimensional {G}aussian location. {I}.
\newblock {\em Theory Probab. Appl.}, 27(2):411--418, 1982.

\bibitem{tsirelson_2}
B.~S. Tsirelson.
\newblock A geometric approach to maximum likelihood estimation for an
  infinite-dimensional {G}aussian location. {II}.
\newblock {\em Theory Probab. Appl.}, 30(4):820--827, 1985.

\bibitem{tsirelson_3}
B.~S. Tsirelson.
\newblock A geometric approach to maximum likelihood estimation for an
  infinite-dimensional {G}aussian location. {III}.
\newblock {\em Theory Probab. Appl.}, 31(3):470--483, 1986.

\bibitem{WX}
A.~R. Wade and C.~Xu.
\newblock Convex hulls of planar random walks with drift.
\newblock {\em Proc. Amer. Math. Soc.}, 143:433--445, 2015.

\bibitem{Wendel}
J.~G. Wendel.
\newblock A problem in geometric probability.
\newblock {\em Math. Scand.}, 11:109--111, 1962.

\end{thebibliography}
\end{document}